\begin{document}

\def \d {{\rm d}}

\def \bm #1 {\mbox{\boldmath{$m_{(#1)}$}}}
\newcommand{\mf}[1]{{\mathfrak #1}}

\def \bg {\mbox{\boldmath{$g$}}}
\def \bV {\mbox{\boldmath{$V$}}}
\def \bT {\mbox{\boldmath{$T$}}}
\def \bk {\mbox{\boldmath{$k$}}}
\def \bl {\mbox{\boldmath{$\ell$}}}
\def \bn {\mbox{\boldmath{$n$}}}
\def \bbm {\mbox{\boldmath{$m$}}}
\def \tbbm {\mbox{\boldmath{$\bar m$}}}
\def \g{\mathfrak g}
\def \T {\bigtriangleup}
\newcommand{\msub}[2]{m^{(#1)}_{#2}}
\newcommand{\msup}[2]{m_{(#1)}^{#2}}
\def\Riem{\mathrm{Riem}}
\def\Ric{\mathrm{Ric}}
\def\span{\mathrm{span}}
\def\sl{\mathfrak{s}\mathfrak{l}(2,\mathbb{R})}

\newcommand{\be}{\begin{equation}}
\newcommand{\ee}{\end{equation}}

\newcommand{\beq}{\begin{eqnarray}}
\newcommand{\eeq}{\end{eqnarray}}
\newcommand{\pa}{\partial}
\newcommand{\pp}{{\it pp\,}-}
\newcommand{\ba}{\begin{array}}
\newcommand{\ea}{\end{array}}

\newcommand{\M}[3] {{\stackrel{#1}{M}}_{{#2}{#3}}}
\newcommand{\m}[3] {{\stackrel{\hspace{.3cm}#1}{m}}_{\!{#2}{#3}}\,}

\newcommand{\tr}{\textcolor{red}}
\newcommand{\tb}{\textcolor{blue}}
\newcommand{\tg}{\textcolor{green}}

\def\b{{\kappa_0}}

\def\E{{\cal E}}
\def\B{{\cal B}}
\def\R{{\cal R}}
\def\N{{\cal N}}
\def\L{{\cal L}}

\def\e{e}
\def\bb{b}

\newtheorem{theorem}{Theorem}[section] 
\newtheorem{cor}[theorem]{Corollary} 
\newtheorem{lemma}[theorem]{Lemma} 
\newtheorem{prop}[theorem]{Proposition}
\newtheorem{con}[theorem]{Conjecture} 
\newtheorem{definition}[theorem]{Definition}
\newtheorem{remark}[theorem]{Remark}  
\newtheorem{q}[theorem]{Question}

\title{Left-invariant Pseudo-Riemannian metrics on Lie groups: \\
The null cone}

\author[$\clubsuit$]{Sigbj\o rn Hervik}

\affil[$\clubsuit$]{Dept. of Mathematics and Physics, University of Stavanger,  N-4036 Stavanger, Norway \newline Email: sigbjorn.hervik@uis.no}

\maketitle

\abstract{We study left-invariant pseudo-Riemannian metrics on Lie groups using the moving bracket approach of the corresponding Lie algebra. We focus on metrics where the Lie algebra is in the null cone of the $G=O(p,q)$-action; i.e., Lie algebras $\mu$ where zero is in the closure of the orbits: $0\in\overline{G\cdot \mu}$. We provide examples of such Lie groups in various signatures and give some general results. For signatures $(1,q)$ and $(2,q)$ we classify all cases belonging to the null cone. More generally, we show that all nilpotent and completely solvable Lie algebras are in the null cone of some $O(p,q)$ action. In addition, several examples of non-trivial Levi-decomposable Lie algebras in the null cone are given. }

\vspace{.2cm}
\noindent

%\tableofcontents
\section{Introduction} 
In this paper we will investigate Lie groups equipped with a left-invariant pseudo-Riemannian metric. Such spaces can be viewed as homogeneous spaces in the sense that the isometry group acts transitively, and the Lie group acts simply transitive on itself. Due to the correspondence between the Lie group and its Lie algebra, the study reduces to considering pseudo-Riemannian metrics on its Lie algebra. 

There are many studies that has considered such metics before, especially in the Riemannian case \cite{Milnor:76,DM,Wolter,Wolter2, Heber,GHG,L1,L2,L3,Sig,L4,Lauret06, Lauret09, Lauret10, LW11,L5}. There have also been some studies for the Lorentzian and the more general pseudo-Riemannian case \cite{CF, solwaves,CR19a,CR19b,CR21,FFS,Markestad}. Most of these studies have been concerned with Einstein metrics or other distinguished metrics on solv- and nilmanifolds. In Markestad's Master thesis \cite{Markestad} the focus was on the degenerate structure of pseudo-Riemannian metrics with vanishing curvature invariants (VSI metrics)\cite{Hervik12,Hervik14}.  A VSI metric is defined as follows: Consider the set of all the curvature tensors the Riemann tensor and its covariant derivatives, $\{{\rm Riem}, \nabla{\rm Riem}, ..., \nabla^{(k)}{\rm Riem},...\}$ . A polynomial curvature invariant, $I$, is an invariant scalar being a polynomial of the (components) of the curvature tensors. If, for all such invariants, $I=0$, then we call the metric VSI. 

In the Lorentzian case such metrics have been studied and all classified, in particular, they are all Kundt metrics \cite{VSI,Higher, CFHP, Kundt}. From a physics point of view these metrics are interesting because they correspond to gravitational wave-like solutions of Einstein theory as well as many other modified theories of gravity \cite{HorowitzSteif,CGHP}. The general pseudo-Riemannian case these VSI metrics are more elusive and lacks a complete understanding partly because the class is richer in more general signatures \cite{Hervik14}.  In this paper we will consider systematically the subclass of these VSI metrics which can be viewed as a Lie group endowed with a left-invariant metric and where its Lie algebra is in the null cone under the action of the orthogonal groups $O(p,q)$. 

Essential for this approach is considering the orbits of the group action $G\cdot\mu$, where $\mu$ is the Lie bracket (defined later). Two Lie algebras are isomorphic (over $\mathbb{R}$) if they are in the same orbit under the $G=GL(n,\mathbb{R})$-action. The corresponding pseudo-Riemannian left-invariant metrics are isometric if they are in the same $G=O(p,q)$-action. The $O(p,q)$-null cone, $\mathcal{N}$, is defined as those $O(p,q)$-orbits having zero in their closure. Even if all $GL(n,\mathbb{R})$-orbits contain zero in its closure, not all $O(p,q)$-orbits do. Here we will investigate conditions for a Lie algebra being in the $O(p,q)$-null cone. 

Aside from listing a lot of examples, including several Ricci-flat pseudo-Riemannian nil- and solvmanifolds, we also give some general results. Two important corollaries are:
\begin{enumerate}
\item{\bf Corollary \ref{CorSimple}: }{}Let the Lie algebra $\mu$ be semi-simple. Then $\mu$ is not in the null cone of any $O(p,q)$-action. 
\item{\bf Corollary \ref{CorNil}: }{} Assume that a $(p+q)$-dimensional Lie algebra $\g$ is in the null cone under an $O(p,q)$-action where $p\leq q$. Then  $\g$ has a nilpotent subalgebra of dimension $q$. 
\end{enumerate}
Even if the semi-simple Lie algebras cannot be in the nullcones, we have found examples of Lie algebras with non-trivial Levi decompositions in the null cone. For example, in dimension six, $\mathfrak{s}\mathfrak{l}(2,\mathbb{R})\oplusrhrim \mathfrak{n}_{3,1}$ is in the null-cone of the $O(2,4)$ action and $\mathfrak{s}\mathfrak{l}(2,\mathbb{R})\oplusrhrim \mathfrak{s}_{3,1, a=1}$ is in the null cone of the $O(3,3)$ action. Other examples are also given. 

There are also solvable examples which are \emph{not} in the null cone of any $O(p,q)$ action: simplest examples here are the three-dimensional Lie algebras of the Euclidean and hyperbolic motion in the plane. On the other hand, if the Lie algebra is nilpotent, or completely solvable, then it is in the null cone of $O(p,p)$ or $O(p,p+1)$ action, see Propositions \ref{PropNil} and \ref{PropCSol}. 

Before we embark on our results, we will first review some of the mathematical background.

\subsection{The moving bracket approach}

Consider a vector space $V$. Then a Lie algebra $\g\cong V$ (as a vector space) can be defined using the Lie algebra bracket $\mu$: $[X,Y]=\mu(X,Y)$, for all $X,Y\in V.$ As such we consider $\mu\in \mathcal{V}:=V\otimes\wedge^2V^*$. We will also equip $V$ with a (fixed)  pseudo-Riemannian metric $g$ of signature $(p,q)$ where $\dim(V)=p+q$ in the folloing way: Assume $p\leq q$ so that $q-p=k$,  we will choose a null basis $\{e_1,...,e_{2p+k}\}$ with a corresponding co-basis $\{\theta^1,...,\theta^{2p+k}\}$: 
\beq \label{metric}
g=2\theta^1\odot\theta^2+...+2\theta^{2p-1}\odot\theta^{2p}+\sum_{i=2p+1}^{2p+k}\theta^i\odot\theta^i,
\eeq
where $\odot$ denotes the symmetric tensor product. Henceforth, we will omit the $\odot$ when we considering symmetric tensor objects.

A common choice is also an orthonormal frame, however, for our purposes a null-frame like above is more convenient due to the boost-weight decomposition, see later section  \ref{boostweightdecomp}.
 
The structure group of the frame bundle for the pseudo-Riemannian metric is the group $G=O(p,q)$.  The structure group acts naturally on the bracket: Given $A\in O(p,q)$, then
\[ (A\cdot\mu)(X,Y)=A^{-1}(\mu(AX,AY)), \]
Hence, for a given $\mu$ we can consider the orbit $G\mu\subset \mathcal{V}$, 

We will henceforth always assume that $p+q\geq 3$ so that $O(p,q)$ is semi-simple. 

%\subsection{The connection and the curvature tensors} 

Given a Lie algebra bracket $\mu$, we can now define a left-invariant torsion-free connection, $\nabla$, by requiring
\beq\label{torsionfree}
\nabla_{X}Y-\nabla_YX=\mu(X,Y), \quad \forall X,Y\in V.  
\eeq
Furthermore, we can define the (Riemann) curvature operator and its covariant derivatives.  

The Riemann curvature tensor, $\Riem$,   will be a homogeneous polynomial of order 2 in $\mu$. In particular, this means that viewing the Riemann tensor as a function of $\mu$, $\Riem(\mu)$, a scaling of $\mu$ with $c\in\mathbb{R}$ gives: 
\[ \Riem(c\mu)=c^2\Riem(\mu).\] 
For the covariant derivatives:
\[ \nabla^{(K)}\Riem(c\mu)=c^{K+2} \nabla^{(K)}\Riem(\mu)\quad \Leftrightarrow\quad \nabla^{(K)}\Riem\text{ is homogeneous polynomial of order }(K+2) \text{ in }\mu. \] 
This structure will be utilised later in the classification and on the properties of the $\Riem$ and $\nabla^{(K)}\Riem$ tensors. 

It is important to note that since the group $O(p,q)$ is the structure group of the frame bundle, each point in its orbit represents equivalent pseudo-Riemannian structures. Polynomial curvature invariants are $O(p,q)-$invariant polynomials of the components of the curvature tensors, and are thus constants along orbits. They are polynomial in the curvature tensors and thus polynomial in the structure constants. 

With the reference to a left-invariant null-basis $\{e_a\}$ and its corresponding left-invariant co-basis $\{\theta^a\}$, we introduce the structure coefficients so that $[e_a,e_b]=C^c_{~ab}e_c$ (summation is assumed over Latin indices, early in the alphabet). These represent the components of $\mu$ with respect to the null-frame. 
In the book \cite{SW} all lower-dimensional Lie algebras  are listed. Whenever we refer to a specific Lie algebra we will consistently use the enumeration in \cite{SW}. For example, the Lie algebra $\mathfrak{s}_{6,242}$ is the six-dimensional solvable Lie algebra given on page 296 in \cite{SW}.  

\section{The null cone}
Under the orbit of $G=O(p,q)$ acting on $\mathcal{V}$ we will define the null cone, $\N\subset\mathcal{V}$ as: 
\[ \N:=\left\{\text{Lie algebra }\mu\in \mathcal{V}: 0\in\overline{G\mu}\right\}; \]
i.e., the null cone consists of all orbits $G\mu$ having $0$ in their closure, $\overline{G\mu}$. 

Let us point out a few facts \cite{RS}: 
\begin{theorem} \label{thmRS}
Given the null cone as defined above. Then: 
\begin{enumerate}
\item{} The orbit $\{0\}$ is the unique closed orbit in $\N$. 
\item{} Consider all polynomial invariants constructed from the bracket under the action of $O(p,q)$. Then given a Lie algebra $\mu\in\mathcal{V}$, these are all zero if and only if $\mu\in\N$. 
\item{} Given a $\mu\neq 0$, and a Cartan decomposition $\g ={\mathfrak k}\oplus{\mathfrak p}$ of $\mathfrak{o}(p,q)$, then there exists an $X\in \mathfrak{p}$ so that $\lim_{t\rightarrow \infty} e^{tX}\cdot\mu=0.$
\end{enumerate}
\end{theorem}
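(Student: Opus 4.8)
The plan is to recognize Theorem \ref{thmRS} as a packaging of the Kempf--Ness / Hilbert--Mumford machinery for a reductive group $G=O(p,q)$ acting linearly on the vector space $\mathcal{V}=V\otimes\wedge^2V^*$, specialized to the zero vector (which is a fixed point of the whole group and hence automatically the image of a closed orbit). Thus the proof would not be a from-scratch argument but a citation-and-translation exercise: we invoke the Hilbert--Mumford criterion and the Kempf--Ness theorem (as in \cite{RS}) and check that the hypotheses hold here, namely that $G$ is reductive, the action is rational, and $\{0\}$ is $G$-stable.

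First I would establish item (1). Since $0$ is a fixed point, $\overline{G\cdot 0}=\{0\}$ is closed. For uniqueness, suppose $G\cdot\nu$ is another closed orbit contained in $\N$; by definition of $\N$, every $\mu\in\N$ has $0\in\overline{G\cdot\mu}$, so in particular $0\in\overline{G\cdot\nu}=G\cdot\nu$, forcing $\nu=0$. Hence $\{0\}$ is the unique closed orbit in $\N$, which is really just unwinding the definition. Next, item (2) is the statement that $\N$ is precisely the common zero locus of all homogeneous $G$-invariant polynomials of positive degree on $\mathcal{V}$ (the nullcone in the classical GIT sense). One direction is immediate: invariants are constant on orbit closures, they vanish at $0$, so they vanish on all of $\N$. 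The converse — that vanishing of all invariants forces $0\in\overline{G\cdot\mu}$ — is the nontrivial separation statement, and I would simply cite the standard result that the affine GIT quotient separates closed orbits, so the fibre of the quotient map over the image of $\{0\}$ consists of exactly those orbits whose closure meets $\{0\}$; combined with item (1) this gives the claim. For item (3), I would invoke the Hilbert--Mumford criterion in the real-reductive form (as worked out for real reductive groups acting on real vector spaces, e.g. in \cite{RS}): if $0\in\overline{G\cdot\mu}$ then there is a one-parameter subgroup $\lambda\colon\mathbb{R}^\times\to G$ with $\lim_{t\to\infty}\lambda(t)\cdot\mu=0$. One then uses the $KAK$ (Cartan) decomposition together with the conjugacy of maximal tori/maximal $\mathbb{R}$-split tori under $K$ to arrange, after replacing $\mu$ by $k\cdot\mu$ for suitable $k\in K$ (which does not change whether $\mu\in\N$) and absorbing the compact part, that $\lambda$ lies in a maximal abelian subalgebra of $\mathfrak{p}$; writing $\lambda(e^t)=e^{tX}$ with $X\in\mathfrak{p}$ yields the stated destabilizing element.

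The main obstacle — and the reason this is cited rather than reproved — is the passage from ``all invariants vanish'' to ``$0\in\overline{G\cdot\mu}$'' in the \emph{real} reductive setting. Over $\mathbb{C}$ this is classical Mumford GIT, but for $G=O(p,q)$ acting on a real vector space one must use the real Kempf--Ness theory: the key input is that the norm-squared function $\mu\mapsto\|k\cdot\mu\|^2$ (with respect to a $K$-invariant inner product on $\mathcal{V}$) attains its infimum on $\overline{G\cdot\mu}$ exactly on the unique closed orbit in that closure, and that a closed orbit meeting the zero set of all invariants must be $\{0\}$. Getting the one-parameter subgroup into $\mathfrak{p}$ in part (3) also requires the real Hilbert--Mumford criterion (due in this generality to Birkes, and exploited in the bracket-flow context by \cite{RS} and by Lauret in the Riemannian case), whose proof is genuinely nontrivial; I would state precisely which result from \cite{RS} is being used and verify its hypotheses apply, rather than reprove it here.
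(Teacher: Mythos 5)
Your proposal matches the paper's treatment: the paper offers no proof at all, simply stating these as ``facts'' imported from Richardson--Slodowy \cite{RS}, which is exactly the citation-and-translation strategy you adopt. Your fleshing-out of the three items (fixed-point/definitional argument for uniqueness of the closed orbit, real GIT separation for the invariant-theoretic characterization, and the real Hilbert--Mumford criterion with conjugation into a maximal abelian subalgebra of $\mathfrak{p}$ for item (3)) is correct and consistent with what \cite{RS} provides.
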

For any non-zero $\mu$ we thus have an $X\in \mathfrak p$ which generates a contraction to the origin. This immediately implies that
{\it all $O(p,q)$-invariant polynomials of $\mu$ must vanish on $\N$. } Of special interest are curvature invariants which are necessarily all  zero as well, i.e., they are VSI spaces \cite{Hervik12}. These invariants are constructed from the Riemann curvature tensor and its derivatives.

Also of interest is the Killing form, which is a homogeneous polynomial of order 2. We define the Killing operator ${K}\in{\rm End}(V)$, by:
\[ B(v,w)=g({K}(v),w), \quad \forall v,w\in V, \]
where $B$ is the Killing form. The invariants of the Killing form, as well as the eigenvalues, need to be zero if $\mu$ is in the null cone, hence: 
\begin{prop}
Let $\mu\in{\mathcal{N}}\subset V\otimes\wedge^2V^*$ be in the null cone. Then $K$ is nilpotent and:
\begin{enumerate}
\item{} if $k=0$, then the index of nilpotency of $K$ is $\leq 2p$. 
\item{} if $k\geq 1$, then the index of nilpotency of $K$ is $\leq(2p+1)$. 
\end{enumerate}
\end{prop}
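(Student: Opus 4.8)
The plan is to split the argument into two essentially independent pieces: first that $K$ is nilpotent, which is pure invariant theory, and then the index bounds, which come from combining $g$-self-adjointness of $K$ with the Witt index of the signature $(p,q)$.

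For nilpotency I would argue as follows. The Killing form $B$ is a homogeneous polynomial of degree $2$ in the structure constants, and since the metric $g$ is fixed, the matrix of the Killing operator $K$ (defined by $B(v,w)=g(Kv,w)$) has entries that are degree-$2$ polynomials in the components of $\mu$. Under $A\in O(p,q)$ the covariant tensor $B$ transforms by pullback while $g$ is invariant, so $K$ is conjugated, $K\mapsto A^{-1}KA$; hence every power trace $\mathrm{tr}(K^m)$, $m\ge 1$ --- equivalently every coefficient of $\det(tI-K)$ other than the leading one --- is an $O(p,q)$-invariant polynomial in $\mu$. By Theorem \ref{thmRS}(2) all such invariants vanish on $\N$, so $\det(tI-K)=t^{p+q}$ and $K$ is nilpotent. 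I would also record at this point the observation, immediate from symmetry of $B$, that $g(Kv,w)=B(v,w)=B(w,v)=g(v,Kw)$, i.e. $K$ is self-adjoint with respect to $g$.

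For the index bound, let $N$ be the index of nilpotency of $K$ and pick $v\in V$ with $K^{N-1}v\neq 0$; then $v,Kv,\dots,K^{N-1}v$ are linearly independent. I would then look at
\[ W=\span\{\,K^j v : \lceil N/2\rceil\le j\le N-1\,\},\qquad \dim W=\lfloor N/2\rfloor. \]
For $i,j\ge\lceil N/2\rceil$ we have $i+j\ge N$, so by self-adjointness $g(K^i v,K^j v)=g(v,K^{i+j}v)=0$; thus $W$ is a totally $g$-isotropic subspace. Since a space of signature $(p,q)$ with $p\le q$ admits no totally isotropic subspace of dimension exceeding $p$, this forces $\lfloor N/2\rfloor\le p$, hence $N\le 2p+1$, which is statement (2). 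When $k=0$ we have $\dim V=2p$, and a nilpotent operator on a $d$-dimensional space always has index at most $d$, so $N\le 2p$, giving statement (1).

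The only point that needs genuine care --- though it is not really an obstacle --- is the invariant-theoretic input: one has to be sure the coefficients of $\det(tI-K)$ really are $O(p,q)$-invariant polynomial functions of $\mu$ (so that Theorem \ref{thmRS}(2) applies) and that over $\mathbb R$ their simultaneous vanishing does force nilpotency. Everything afterwards is elementary. It is also worth noting that the stated bound improves on the trivial bound $\dim V=2p+k$ only when $k\ge 2$; for $k\in\{0,1\}$ the conclusion is immediate from $\dim V$ once nilpotency is known, so the substantive content for $k\ge 2$ is precisely the totally-isotropic-subspace argument resting on self-adjointness of $K$.
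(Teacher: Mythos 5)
Your proof is correct, and it reaches the same two pillars the paper rests on, but it fills in what the paper only cites. The paper's own proof is a one-liner: vanishing of the invariants forces all eigenvalues of $K$ to be zero, and the index bounds are then read off from Petrov's canonical forms for self-adjoint operators in arbitrary signature \cite{Petrov} (which classify the admissible Jordan block structures: a nilpotent block of size $m$ consumes a subspace of Witt index $\lfloor m/2\rfloor$, whence size $\leq 2p+1$ in signature $(p,q)$, $p\leq q$). Your argument proves exactly this block-size constraint from scratch: the cyclic-vector construction $W=\span\{K^jv:\lceil N/2\rceil\leq j\leq N-1\}$ together with $g$-self-adjointness of $K$ produces a totally isotropic subspace of dimension $\lfloor N/2\rfloor$, and the Witt index bound does the rest. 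What you buy is self-containedness and transparency about where the signature enters; what the citation buys the paper is brevity and, implicitly, the full list of canonical forms rather than just the size bound. Your nilpotency step (conjugation-equivariance of $K$, hence $O(p,q)$-invariance of the coefficients of $\det(tI-K)$ as polynomials in $\mu$, hence their vanishing on $\N$ by Theorem \ref{thmRS}(2)) is a careful spelling-out of the paper's ``all eigenvalues are zero,'' and your closing remark that the bound is only non-trivial for $k\geq 2$ is accurate and worth keeping.
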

\begin{proof}
If $k=0$ this trivially holds since $\dim V=2p$. For $k\geq 1$, we note first that $K_{A\cdot\mu}=A^{-1}K_{\mu}A$ for $A\in O(p,q)$. 
Using that all eigenvalues are zero, this can be seen by using the Jordan canonical forms combined with Petrovs canonical forms for arbitrary signatures \cite{Petrov}.
\end{proof}
We point out that this gives an upper bound of the level of nilpotency of the operator $K$, however, in practice the level is lower as we will see later. Notwithstanding this, it immediately implies: 
\begin{cor}\label{CorSimple}
Let $\mu$ be semi-simple. Then $\mu$ is not in the null cone of any $O(p,q)$-action. 
\end{cor}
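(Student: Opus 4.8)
The plan is to deduce Corollary \ref{CorSimple} directly from the Proposition that precedes it, which already establishes that the Killing operator $K$ must be nilpotent whenever $\mu\in\N$. The whole argument is a one-line contrapositive, so the work is really just in articulating why semisimplicity contradicts nilpotency of $K$.

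First I would recall Cartan's criterion: a Lie algebra $\mu$ is semisimple if and only if its Killing form $B$ is non-degenerate. Since $B(v,w)=g(K(v),w)$ with $g$ non-degenerate, non-degeneracy of $B$ is equivalent to $K$ being an invertible endomorphism of $V$. Now suppose, for contradiction, that a semisimple $\mu$ lies in $\N$. By the Proposition, $K$ is nilpotent; but a nilpotent operator on a finite-dimensional space has $0$ as its only eigenvalue and hence is not invertible (indeed $\det K = 0$), so $B$ is degenerate, contradicting semisimplicity. Therefore no semisimple $\mu$ can lie in the null cone of any $O(p,q)$-action.

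Alternatively — and this is essentially the same point viewed through Theorem \ref{thmRS} — one can argue via invariants: the discriminant of the Killing form (equivalently $\det K$ up to the fixed factor $\det g$) is an $O(p,q)$-invariant polynomial in the components of $\mu$, and by part 2 of Theorem \ref{thmRS} every such invariant vanishes on $\N$. For semisimple $\mu$ this discriminant is non-zero, so $\mu\notin\N$. I would probably present the first formulation, since it uses only the Proposition just proved, and perhaps mention the invariant-theoretic viewpoint in a sentence.

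There is no real obstacle here; the only thing to be slightly careful about is to invoke Cartan's criterion in the correct direction (semisimple $\Rightarrow$ $B$ non-degenerate $\Rightarrow$ $K$ invertible) and to note that the argument is uniform in $(p,q)$ — it does not matter which orthogonal group acts, since nilpotency of $K$ on $N$ is forced for every signature by the preceding Proposition. Hence the statement holds for all $O(p,q)$ with $p+q\geq 3$.
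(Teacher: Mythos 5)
Your argument is correct and is exactly the one the paper intends: the Proposition forces $K$ to be nilpotent on $\mathcal{N}$, while Cartan's criterion makes $K$ invertible for semisimple $\mu$, so the two are incompatible. The paper states only that the Proposition ``immediately implies'' the corollary, and your write-up simply makes that implication explicit.
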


\subsection{The boost weight decomposition of tensorial objects} 
\label{boostweightdecomp}
By using the maximal Abelian subalgebra $\mathfrak{a}\subset \mathfrak{p}$ of dimension equal to the real rank of $O(p,q)$, we define a set of positive weights ${\sf b}_i\in\mathfrak{a}$, $1\leq i \leq \min(p,q)$. Henceforth, we will assume that $p\leq q$, so that $q=p+k$, for some $k\in \mathbb{N}_0$. Using an appropriate scaling, we can assume that the eigenvalues of each ${\sf b}_i$ acting on $V$ are $0, \pm 1$. As each ${\sf b}_i$ is in $\mathfrak{o}(p,q)$, the action of the ${\sf b}_i$ can now be lifted tensorially so that it acts on any tensor-space $\bigotimes_nV\otimes\bigotimes_m V^*$. Since each ${\sf b}_i$ commute, we can for any tensor, $T$, perform an eigenvalue decomposition \cite{Hervik10,Hervik12}:
\beq\label{bwdecomp} T=\sum_{(b_1,b_2,...,b_p)\in \Delta}(T)_{(b_1,b_2,...,b_p)}, \eeq
where $\Delta$ is a finite subset of $\mathbb{Z}^p$. With the notation $(T)_{(b_1,b_2,...,b_p)}$ we mean the eigenvalue component of $T$ so that ${\sf b}_i\cdot(T)_{(b_1,b_2,...,b_p)}=b_i(T)_{(b_1,b_2,...,b_p)}$, $\forall i$. We will refer to the $p$-tuple $(b_1,b_2,...,b_p)$ as the \emph{boost weight}. 

We note that the metric tensor is invariant under the group $O(p,q)$, hence $g=(g)_{(0,..,0)}$. Furthermore
\[ (T\otimes S)_{(b_1,...,b_p)}=\sum_{(b_1,...,b_p)=(\hat{b}_1,...,\hat{b}_p)+(\tilde{b}_1,...,\tilde{b}_p)}(T)_{(\hat{b}_1,...,\hat{b}_p)}\otimes(S)_{(\tilde{b}_1,...,\tilde{b}_p)}. \] 
The boost weight therefore adds vectorially when we take tensor products. 

The subset $\Delta$ will for the different curvature tensors (for the split case there are additional conditions): 
\begin{enumerate}
\item{} Riemann tensor: $\sum_i |b_i |\leq 4$ and $|b_i|\leq 2$. 
\item{} Ricci tensor: $\sum_i |b_i| \leq 2$.
\end{enumerate}
In the split cases, $p=q$, then in addition, $\sum b_i$ is even for both the Riemann and Ricci tensors (or for any other even-indexed tensor). For an odd-indexed object, like the structure constants, then $\sum b_i$ must be odd. 

\paragraph{Permutations:} We note that the metric (\ref{metric}) has permutation symmetries (relabelling the 1-forms): 
\[ \sigma: \qquad \theta^{2i-1} \rightleftharpoons \theta^{2i}, \qquad (\theta^{2i-1},\theta^{2i})\rightleftharpoons (\theta^{2j-1},\theta^{2j}), \qquad 1\leq i,j\leq p. \] 
These permutations will produce multiple sets in $\mathcal{V}$ being identical as pseudo-Riemannian spaces, thus it is useful to consider the quotient $\mathcal{V}/ \sigma$. All the cases below can be written as an inequality 
\[ x_1b_i+x_2b_2+...+x_pb_p\leq -1,\] 
and we will choose the representative of $\mathcal{V}/ \sigma$ where the coefficients are non-negative and (reversely) well-ordered: 
\[ x_1\geq x_2\geq ...\geq x_p\geq 0.\] 

These coefficients are directly related to the element  $X\in \mathfrak{p}$ in Theorem \ref{thmRS}. More specifically, using the metric (\ref{metric}), then the action of $X\in \mathfrak{p}$ on the basis 1-forms gives: 
\[ e^{tX}\left\{\theta^1,\theta^2,\theta^3,\theta^4,...,\theta^{2p-1},\theta^{2p},\theta^i\right\}=\left\{e^{-tx_1}\theta^1,e^{tx_1}\theta^2,e^{-tx_2}\theta^3,e^{tx_2}\theta^4,...,e^{-tx_p}\theta^{2p-1},e^{tx_p}\theta^{2p},\theta^i\right\}\] 
This means that for all tensor components, $(T)_{(b_1,...,b_p)}$, obeying $x_1b_i+x_2b_2+...+x_pb_p\leq -1$, will then under the action:
\[ \lim_{t\rightarrow\infty}e^{tX}(T)_{(b_1,...,b_p)}=\lim_{t\rightarrow\infty}e^{t(x_1b_i+x_2b_2+...+x_pb_p)}(T)_{(b_1,...,b_p)}=0;\] 
hence, 0 will be in the closure, and $T$ be in the null cone, as claimed. 
\subsection{Classification of orbits in the null cone}
The classification of orbits in the null cone will be a two-step process. First we will classify orbits in the more general null cone of any tensor in $\mathcal{V}$: 
\[ \N_{\mathcal V}:=\left\{T\in \mathcal{V}: 0\in\overline{G\cdot T}\right\}. \]
The first step is to find canonical representatives in the orbits in $\N_{\mathcal V}$. 
However, not all tensors in $\mathcal{N}_{\mathcal V}$ represent a Lie algebra. The set of tensors representing a Lie algebra is a subvariety of $\mathcal{V}$: 
\[ \mathcal{J}:=\left\{T\in \mathcal{V}: T(X,T(Y,Z))+T(Y,T(Z,X))+T(Z,T(X,Y))=0, ~\forall X, Y, Z\in V \right\}; \] 
i.e., those that satisfies the Jacobi identity. Clearly, 
\[ \N=\N_{\mathcal V}\cap\mathcal{J}.\] 
Hence, the second step is to restrict to those that obey the Jacobi identity. Some of the cases will turn out to be not realisable unless you restrict to a more special case. In particular, in signatures $(1,1+k)$ and $(2,2+k)$ we will give a full classification of realisable cases in the null cone. For other signatures, partial results are provided. 

\section{Lorentzian case $O(1,n-1)$.}
The Lorentz group is of real rank 1 so there is only one ${\sf b}\equiv{\sf b}_1$. There are two cases to consider according to the boost weight. Let us introduce a null-frame $\{e_1,e_2,e_i\}$, where $3\leq i \leq n$, aligned with ${\sf b}$ so that ${\sf b}(e_1)=e_1$, ${\sf b}(e_2)=-e_2$, and ${\sf b}(e_i)=0$. This implies that  the bracket can be decomposed as: 
\[ \mu=(\mu)_{-1}+(\mu)_{-2}.\]
In the Lorentizan case tensors with such a decomposition are referred to as \emph{type III} tensors. If, in addition, $(\mu)_{-1}=0$ so that $ \mu=(\mu)_{-2}$, then it is of the more special case of \emph{type N}. 

\subsection{$\mu$ of type III} 
\label{sect:typeIII}
Here, $\mu=(\mu)_{-1}+(\mu)_{-2}$. This allows for the only non-zero components: 
\[ \text{b.w.} -2:~ C^2_{~1i}, \qquad\text{b.w.} -1:~ C^2_{~12},~C^2_{~ij}, ~C^i_{~1j}.\] 
Thus, $[\g,\g]:=\g_1\subset \text{span}\{e_2,e_i\}$. Furthermore, $[\g_1,\g_1]\subset \text{span}\{e_2\}$ and $[\g,\g_1]\subset \text{span}\{e_2,e_i\}$. Hence, in general this is a solvable Lie algebra with a 2-step nilpotent nilradical. 

The Jacobi identity implies the following non-trivial equations among these structure coefficients: 
\beq\label{JtypeIII}
 C^2_{~12}C^2_{~ij}-\sum_{l=3}^{n}\left(C^2_{~il}C^l_{~1j}-C^2_{~jl}C^l_{~1i}\right)=0
 \eeq

For this case $\Riem$, $\Ric$ and the Killing form have only boost weight $-2$ components which implies that these tensors quadratic in $\mu$ are all of type N. Explicitly, the only possible non-zero components (independent) for the Ricci tensor and Killing form are:
\beq
%g(e_1,\Riem(e_1,e_j)e_i)&=&...\\
\Ric_{11} &=&\frac 12\sum_{i<j}\left(C^2_{~ij}\right)^2-\frac 12 \sum_{i,j}\left[\left(C^i_{~1j}\right)^2+C^i_{~1j}C^j_{~1i}\right]+C^2_{~12}\sum_{i}C^i_{~1i}\\
B(e_1,e_1)& = & \left(C^2_{~12}\right)^2+\sum_{i,j}C^i_{~1j}C^j_{~1i}.
\eeq
Similarly, higher derivatives can also be computed and we note that $\nabla^{(k)}\Riem$ has only boost-weight $-(2+k)$ components. 

As a class of examples of these Lie algebras, consider dimension 4. Then the Jacobi identity, eq.(\ref{JtypeIII}), reduces to 
\[ C^2_{~34}(C^2_{~12}-C^3_{~13}-C^4_{~14})=0.\] 
If we assume $C^2_{~34}=c\neq 0$, then this example allows for the 2-step nilradical $\mathfrak{n}_{3,1}$. Then, defining: 
\[ C^2_{~12}=a+b, \quad C^3_{~13}=a, \quad C^4_{~14}=b,\] 
the Jacobi identity is satisfied. Setting the components: 
\[ C^3_{~14}=\alpha, \quad C^4_{~13}=\beta, \] 
gives 
\[ \Ric=-\frac 12\left[(\alpha+\beta)^2-c^2-4ab\right]\theta^1\theta^1, \qquad B=2\left(a^2+ab+b^2+\alpha\beta\right)\theta^1\theta^1.\] 
This class of metrics allows for all the solvable Lie algebras $\mathfrak{s}_{4,6}-\mathfrak{s}_{4,11}$. Furthermore, it is also clear that there is a subclass which is Ricci-flat. 

Consider a concrete example, namely $\mathfrak{s}_{4,8}$, where $a=1$, $c=1$, $\alpha=\beta=0$. Then, an explicit left-invariant metric can be given: 
\[ \theta^1 = \d x\ ~~ \theta^2=\d y - z\d w + (bzw - (1 + b)y) \d x, ~~ \theta^3=\d z- z\d x, ~~\theta^4 =\d w - bw\d x,\]
so that 
\[ g=2\theta^1\theta^2+\left(\theta^3\right)^2+\left(\theta^4\right)^2.\]  
If $b=-1/4$ then this is a Ricci-flat metric.

\subsection{$\mu$ of type N} Here, $\mu=(\mu)_{-2}$. This gives the only non-zero components
\[ \text{b.w.} -2:~C^2_{~1i}.\] 
Hence, $\g$ is 2-step nilpotent and the Riemann tensor is zero, and consequently, the metric is flat. 

The only admissable non-abelian Lie algebra is $\mathfrak{n}_{3,1}\oplus \mathbb{R}^{k-1}$. By a spatial rotation we can assume that the only non-zero component is $C^2_{~13}=a$. Thus, all left-invariant metrics of this type can be written: 
\[ g=2\d x(\d y-ax\d z^1)+\sum_{i=1}^{k}(dz^i)^2.\]

\section{Signature $(2,2+k)$.} 
The groups $O(2,2+k)$ have real rank 2, so here b.w. decomposition is given in terms of the two eigenvalues of the positive weights. Any eigentensor can therefore be classified using a pair of integers $(b_1,b_2)\in\mathbb{Z}^2$. For a tensor  $\mu\in V\otimes\wedge^2V^*$, we can plot the possible non-zero eigenspaces on the lattice $\mathbb{Z}^2\subset \mathbb{R}^2$. The null cone can now be described by the union of the following two cases: 
\begin{enumerate}
\item{Case $[3,1]$:} This type is characterised by allowing only $C^c_{~ab}$ whose boost weights $(b_1,b_2)$ obey $3b_1+b_2<0$, all other are zero, see fig. A tighter bound can be given by $3b_1+b_2\leq -1$. 
\item{Case $[3,2]$:} This type is characterised by allowing only $C^c_{~ab}$ whose boost weights $(b_1,b_2)$ obey $3b_1+2b_2<0$, all other are zero, see fig. A tighter bound can be given by $3b_1+2b_2\leq -1$. 
\end{enumerate}
Note that the intersection of these two types can be described with $2b_1+b_2\leq -1$ (case $[2,1]$ below). 

These types have numerous subcases. In order to classify them we will use the following notation. If the case can be described by non-zero structure coefficients only if the boost weight obeys (normalising right hand side to $-1$): 
\[ xb_1+yb_2\leq -1 \] 
then this case will be denoted as case $[x,y]$, where, $x$ and $y$ would  be rationals, $x,y\in\mathbb{Q}$. 

\begin{center}
\includegraphics[width=15cm]{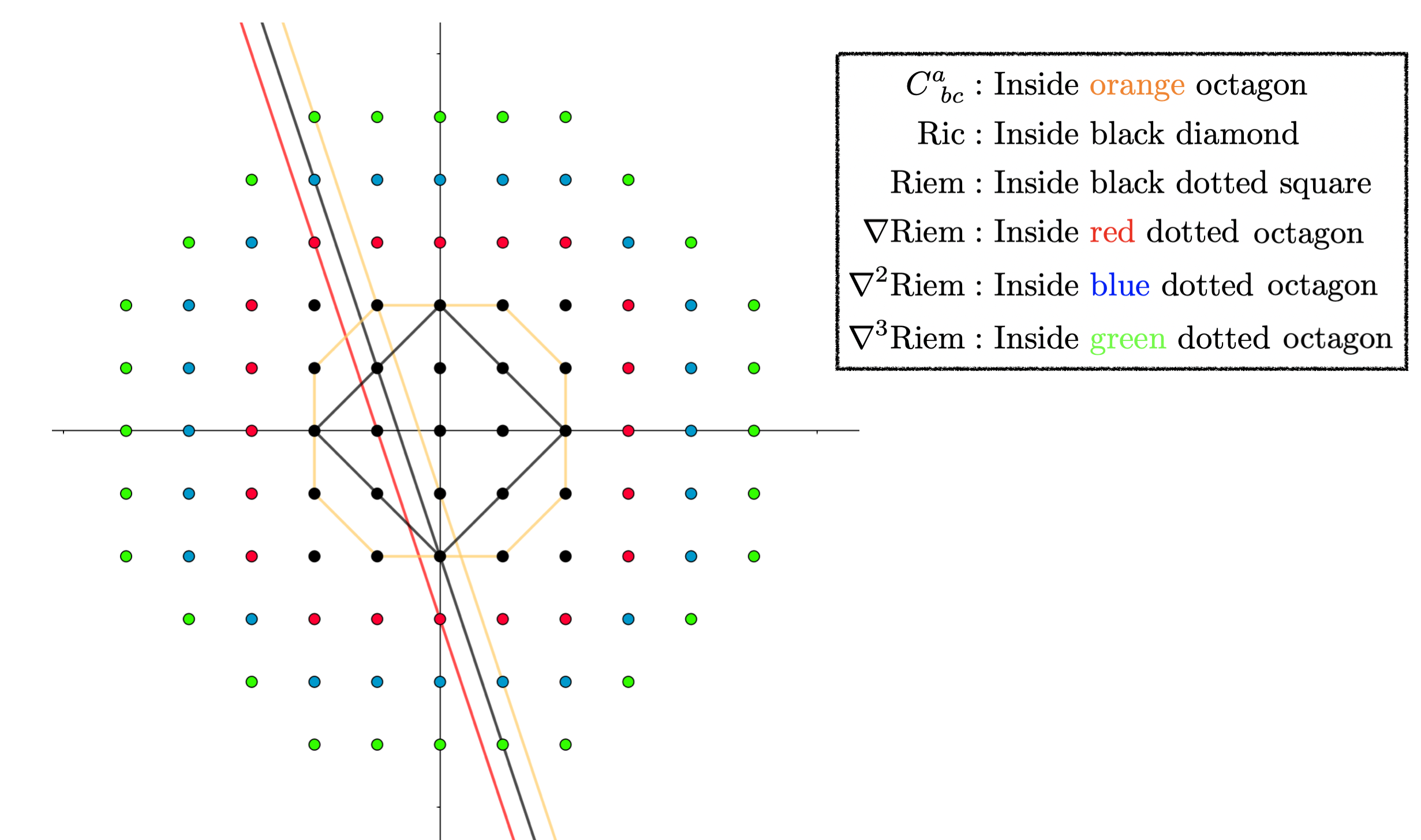}\\
{The figure illustrates the allowable boost weights for the structure constants and some curvature tensors. The black diamond  (the Ricci diamond) corresponds to any symmetric 2-tensor and thus includes also the Killing form. The straight orange line is the example of case [3,1] below corresponding to boost weights obeying $3b_1+b_2\leq -1$. The black and red lines correspond to the Riemann and the $\nabla$Riemann tensors, respectively, for case [3,1]. }
\end{center}

\subsection{Classification of cases}
Assume a left-invariant frame: $\{e_1,e_2,e_3,e_4,e_i\}$ with its left-invariant 1-forms: $\{\theta^1,\theta^2,\theta^3,\theta^4,\theta^i\}$. 

The pseudo-Riemannian left-invariant metric is taken to be: 
\[ g=2\theta^1\theta^2+2\theta^3\theta^4+\sum^{4+k}_{i=5}(\theta^i)^2.\]
For all the cases below, we have given the: 
\begin{itemize}
\item{} {\textsl{Universality order}}: This is the lowest number $k$ so that a polynomial of order $k$ in $C^a_{~bc}$ does not have boost-weights inside the Ricci-diamond. This implies that any symmetric 2-tensor being homogeneous of order $k$ in $C^a_{~bc}$ is zero. 
\item{} {\textsl{Jordan type}}: The structure of the Jordan block form of the Ricci and Killing operators. Only non-zero blocks are given and all the blocks are nilpotent for the structure constants in the null cone; e.g. type ${\mathbf 3}\oplus{\mathbf 2}$ implies that the Jordan block form consists of a $3\times 3$ nilpotent block and a $2\times 2$ nilpotent block, the rest is zero. 
\end{itemize}

In appendix \ref{Cabc} the relevant structure constants for the null cone are listed, and it can be assumed that all others are zero. They are listed according to the boost weights and for each case below some of the boost weights are disallowed. 
\subsubsection{Case [3,1]} 
Universality order: 7. Ricci and Killing operators of Jordan type: ${\mathbf 4}$.

Considering the allowed boost-weight components the Jacobi identity implies (among others): 
\[  C^3_{~14}(C^1_{~13}-C^4_{~34})=0.\] 
If $C^3_{~14}=0$ then this is actually a simpler case (case [2,1] below). Assuming $C^3_{~14}\neq 0$ implies $C^1_{~13}=C^4_{~34}$. 
This case  can be split  into two whether $C^1_{~13}=C^4_{~34}$ equals zero or not, corresponding to whether the Lie algebra is solvable or not.  

\paragraph{ Case [3,1]a: Non-solvable case.}  This case allows for algebras with semi-simple subalgebras (but it cannot be semi-simple itself). The simplest example seems to be in neutral 4 dimensions ($k=0$): $\mathfrak{s}\mathfrak{l}(2,\mathbb{R})\oplus \mathbb{R}$, with structure constants: 
\[ C^1_{~13}=C^4_{~34}=2, \qquad C^3_{~14}=1.\] 
Riemann tensor is non-zero, and Ricci tensor  and Killing form are: 
\[  \Ric=6(\theta^4\theta^1-\theta^3\theta^3), \qquad B=-8(\theta^1\theta^4-\theta^3\theta^3).\]
For these examples, both of the Ricci tensor and the Killing form are of Jordan type ${\mathbf 4}$. 

Hence, this case can be realised in any dimension of signature $(2,2+k)$. 

\paragraph{ Case [3,1]b: Solvable case.} If $C^1_{~31}=C^4_{~34}=0$, then we get the following derived series: 
\[ [\g,\g]:=\g_1\subset{\rm span}\{e_2,e_3,e_4,e_i\}, \quad  [\g_1,\g_1]:=\g_2\subset{\rm span}\{e_2,e_4,e_i\}, \] 
\[ [\g_2,\g_2]:=\g_3\subset{\rm span}\{e_2,e_4\},\quad 
[\g_3,\g_3]=0.\] 
and hence, it is solvable. 

As an example, we can choose the solvable Lie algebra $\mathfrak{s}_{4,1}$ and the structure constants: 
\[ C^4_{~12}=1, ~~C^3_{~13}=1,~~C^3_{~14}=1.\]
The Riemann tensor is non-zero, and the Ricci tensor and Killing form are: 
\[ \Ric=-\tfrac 12\theta^1\theta^1+2\theta^1\theta^3+\theta^1\theta^4-\tfrac 12\theta^3\theta^3, \qquad B=\theta^1\theta^1\] 
Ricci tensor is of Jordan type ${\mathbf 4}$, and the Killing form of type ${\mathbf 2}$.

\subsubsection{Case [3,2]}
Universality order: 7. Ricci and Killing operators of Jordan type: ${\mathbf 3}\oplus{\mathbf 2}$.

Using the Jacobi identity, one gets for the boost weight $(0,-1)$ component: 
\[ C^4_{~23}C^3_{~1i}=0, \quad  C^4_{~23}C^2_{~4i}=0, \quad C^i_{~14}C^4_{~23}=0.\] 

This implies that  $C^4_{~23}=0$ or $C^3_{~1i}=C^2_{~4i}=C^i_{~14}=0$. Hence, the Jacobi identity forces the Lie algebra  to belong to the more special subcases below.

\subsubsection{Case [2,1]}
Universality order: 5. Ricci and Killing operators of Jordan type: ${\mathbf 3}\oplus{\mathbf 2}$.

Also this case can be split in a solvable and non-solvable case. 

\paragraph{ Case [2,1]a: Non-solvable case.} 

 Interestingly, there are examples of the case $[2,1]$ using a non-trivial Levi decomposition in dimension 6.  This time we choose the solvable Levi factor to be the 3-dimensional nilpotent algebra  $\mathfrak{n}_{3,1}$. We let $\mathfrak{s}\mathfrak{l}(2,\mathbb{R})=\span\{e_1,e_3,e_5\}$ and $\mathfrak{n}_{3,1}=\span\{e_2,e_4,e_6\}$, so that: 
\beq\label{sl2Rn}
&&(-1,1):~~ C^3_{~15}=1,~~ C^2_{~46}=a, ~~C^6_{~14}=-b, \nonumber \\
&&(0,-1):~~C^1_{~13}=2, ~~C^5_{~53}=-2, ~~C^4_{~34}=b, ~~C^6_{~36}=-b, ~~C^4_{~56}=b.
\eeq
There are four different Lie algebras according to the values of $a$ and $b$: 
\begin{enumerate}
\item{} $a\neq 0$, $b=1$:  $\mathfrak{s}\mathfrak{l}(2,\mathbb{R})\oplusrhrim \mathfrak{n}_{3,1}$ (non-trivial Levi decomposable).
\item{} $a=0$, $b=1$: $(\mathfrak{s}\mathfrak{l}(2,\mathbb{R})\oplusrhrim \mathbb{R}^2)\oplus\mathbb{R}$
\item{} $a\neq 0$, $b=0$:  $\mathfrak{s}\mathfrak{l}(2,\mathbb{R})\oplus \mathfrak{n}_{3,1}$ (decomposable).
\item{} $a=0$, $b=0$: $\mathfrak{s}\mathfrak{l}(2,\mathbb{R})\oplus \mathbb{R}^3$
\end{enumerate}
The last of these also exists in dimension 5 by restricting to  $\mathfrak{s}\mathfrak{l}(2,\mathbb{R})\oplus \mathbb{R}^2$.
Computing the Ricci and Killing forms: 
\[ \Ric=\left({b^2}+4\right)\theta^1\theta^5-(ab(1-b)-2a-b)\theta^1\theta^6-3\left(\tfrac{b^2}2+2\right)\theta^3\theta^3, \quad B=2(b^2+4)\left(-\theta^1\theta^5+\theta^3\theta^3\right). \] 
For these, the Ricci tensor and Killing form are of Jordan type  ${\mathbf 3}\oplus{\mathbf 2}$.
\paragraph{ Case [2,1]b: Solvable case.} 
A class of solvable metrics in 5-dimensions can also be found here. 
Consider the 5-dimensional solvable Lie algebra: 
\beq
C^4_{~12}=d  ,~~C^1_{~13}=a, ~~C^2_{~23}=b, ~~C^5_{~14}=c,~~C^4_{~34}=-(a+b), ~~C^5_{~35}=-(2a+b)
\eeq
Ricci tensor and Killing form: 
\beq
\Ric= -3 c \left(b +2 a \right) \theta^1\theta^5 -\frac 12\left({3 a^{2}}-{b^{2}}+d^{2}\right) \theta^3\theta^3, \quad 
B=3\left(2 a^{2}+2 a b +b^{2}\right) \theta^3\theta^3
\eeq
Here, the Ricci type is  ${\mathbf 3}\oplus{\mathbf 2}$, while the Killing form has type  ${\mathbf 2}$. Note that there are also simpler Ricci type in this class, in particular, if $b=-2a$ and $d=\pm a$ then this metric is Ricci flat. 

As a concrete example in dimension 6, choose the solvable Lie algebra $\mathfrak{s}_{6,242}$ which is a solvable Lie algebra  having a 3-step nilpotent nilradical. Given the structure coefficients: 
\[ C^2_{~46}=C^4_{~56}=C^6_{~16}=C^4_{~14}=C^5_{~35}=C^4_{~34}=C^2_{~32}=1, \quad C^2_{~12}=2, \] 
give the Ricci tensor and Killing form: 
\[ \Ric=\tfrac52\theta^1\theta^1+3\theta^1\theta^3-3\theta^1\theta^6+\theta^3\theta^3, \quad B=6\theta^1\theta^1+6\theta^1\theta^3+3\theta^3\theta^3. \] 
For this example, the Ricci tensor is of Jordan type  ${\mathbf 3}\oplus{\mathbf 2}$ while the Killing form is of type  ${\mathbf 2}\oplus{\mathbf 2}$.

\subsubsection{Case [2,$\tfrac{\mathbf 1}{\mathbf 2}$]}
Universality order: 5. Ricci and Killing operators of Jordan type: ${\mathbf 3}$.

Using the Jacobi identity we get for the boost-weight $(-1,0)$ component: 
\[ C^3_{~14}C^4_{~3i}=0 \quad \Leftrightarrow \quad C^3_{~14}=0~\vee~C^4_{~3i}=0.\] 
Hence, a Lie algebra needs to be one of the subcases  [$\tfrac{3}{ 2}$,$\tfrac{1}{2}$] or [$1,0$]. 

In any case, computing the derived series: 
\[ [\g,\g]:=\g_1\subset{\rm span}\{e_2,e_3,e_4,e_i\}, \quad  [\g_1,\g_1]:=\g_2\subset{\rm span}\{e_2, e_4\}, \quad  [\g_1,\g_2]=\g_3\subset{\rm span}\{e_2\}\quad  [\g_1,\g_3]=0.\]
So any Lie algebra belonging to any of its subcases is necessarily  solvable, with a 4-step nilpotent nilradical (or simpler). 

\subsubsection{Case [$\tfrac{\mathbf 3}{\mathbf 2}$,$\tfrac{\mathbf 1}{\mathbf 2}$]}
Universality order: 4. Ricci and Killing operators of Jordan type: ${\mathbf 2}\oplus{\mathbf 2}$.

The Lie algebra is solvable, with a 4-step nilpotent nilradical. 
Examples of this case can be found. Consider the 5-dimensional solvable Lie algebra with the nilradical $\mathfrak{n}_{4,1}$  (isomorphic to the 1-parameter family of Lie algebras $\mathfrak{s}_{5,35}$, see below): 
\[                            C^2_{~45} = 1, \quad 
                           C^4_{~35} = 1, \quad 
                           C^4_{~14} = a, \quad 
                           C^5_{~15} = b, \quad 
                         C^3_{~13} = a - b, \quad 
                         C^2_{~12} = a + b, \quad 
                           C^3_{15} = c
\] 
The Riemann tensor is non-zero and the Ricci tensor and Killing form is: 
\[ 
\Ric=\frac b2(8a-3b)\theta^1\theta^1-(c-1)\theta^1\theta^3, \quad B=3(a^2+b^2)\theta^1\theta^1.
\] 
The Ricci operator is of Jordan type ${\mathbf 2}\oplus{\mathbf 2}$, while the Killing operator is of type ${\mathbf 2}$. 

Interestingly, if $c=1$ and $b=8a/3$ then this is Ricci-flat. 
\subsubsection{Case [1,1]}
Universality order: 3. Ricci and Killing operators of Jordan type: ${\mathbf 2}\oplus{\mathbf 2}$.

An example in 5 dimensions is the solvable Lie algebra $\mathfrak{s}_{5,35}$ with structure coefficients: 
\[ C^4_{~41}=a+2,\quad C^2_{~21}=a+1,\quad C^5_{~51}=a,\quad C^3_{~31}=C^4_{~23}=C^2_{~53}=1.\]
The Riemann tensor is non-zero and Ricci tensor and Killing form are: 
\[ \Ric=\frac 12(a^2+4a-3)\theta^1\theta^1+(2a+3)\theta^3\theta^3, \qquad B=3(a^2+2a+2)\theta^1\theta^1.\] 
This example has Ricci operator of Jordan type  ${\mathbf 2}\oplus{\mathbf 2}$, while the Killing operator is of the simpler Jordan type  ${\mathbf 2}$.

\subsubsection{Case [1,$\tfrac{\mathbf 1}{\mathbf 2}$]}
Universality order: 3. Ricci and Killing operators of Jordan type: ${\mathbf 2}$.

A class of examples has been found for dimension $\geq$ 5. Consider the following solvable Lie algebra in dimension $\geq 5$ of signature $(2,2+k)$. 
\beq
&&C^2_{~12}=2, ~~C^3_{~13}=1, ~~C^4_{~14}=1, ~~C^2_{~34}=a, ~~C^2_{~13}=b, ~~C^4_{~13}=c, ~~C^2_{~14}=d, ~~C^5_{~13}=e, \nonumber \\
&&C^2_{~1i}=f_i, ~~C^4_{~1i}=g_i, ~~C^4_{~3i}=h_i, ~~C^2_{~3i}=ag_i-dh_i, \quad i=5,...,4+k. 
\eeq
This Lie algebra is (at least) decomposible into a direct sum of a 5-dimensional solvable Lie algebra, and the Abelian Lie algebra $\mathbb{R}^{k-1}$.  

Computing the Ricci tensor and the Killing form give: 
\[ \Ric=\frac 12(4-a^2)\theta^1\theta^1 \qquad B=6\theta^1\theta^1.\] 
Interestingly, if $a=\pm 2$, then the metric is Ricci-flat.

The perhaps simplest example in this class is the solvable Lie algebra $\mathfrak{s}_{5,36}$. A class of pseudo-Riemannian metrics can be given with structure coefficients ($a\neq 0$): 
\[ C^2_{~12}=2,\quad C^3_{~13}=1, \quad C^4_{~14}=1, \quad C^2_{~34}=a, \quad C^4_{~35}=1.\] 
The Riemann tensor is non-zero and Ricci tensor and Killing form are given above. This Lie algebra thus admits a Ricci-flat pseudo-Riemannian metric. 
\subsubsection{Case [1,0]}
Universality order: 3. Ricci and Killing operators of Jordan type: ${\mathbf 2}$.

Lie algebra is solvable, with a 2-step nilpotent nilradical. 

A class of 5-dimensional metrics of this type can be given.  Consider the following non-zero structure coefficients: 
\beq
C^2_{~12}, ~~ C^I_{~13}, ~~ C^I_{~14}, ~~C^I_{~15},  ~I=2,3,4,5,
\eeq
then this is a class of Lie algebras of case  [1,0].  Among these are also Ricci-flat metrics. 

Two super-simple examples of the case [1,0] are the solvable Lie algebras $\mathfrak{s}_{3,1 (a=-1)}\oplus \mathbb{R}^{k+1}$ and $\mathfrak{s}_{3,3 (\alpha=0)}\oplus \mathbb{R}^{k+1}$ with  structure coefficients: 
\[ C^4_{~13}=1, \quad C^3_{~14}=\pm 1\] 
The Riemann tensor is non-zero and Ricci tensor and Killing form for these are: 
\[ \Ric=\mp 2\theta^1\theta^1 \qquad B=\pm2\theta^1\theta^1.\] 
These two examples have $\nabla\Riem=0$ but the more general examples do not have this property.

\subsubsection{Case [$\tfrac{\mathbf 3}{\mathbf 4}$,$\tfrac{\mathbf 1}{\mathbf 2}$]}
Universality order: 2. Ricci and Killing operators are zero. 3-Symmetric. 

Lie algebra is 4-step nilpotent, but general type not allowed by the Bianchi identity. Lie algebras must belong to one of the simpler cases [$\tfrac{ 2}{ 3}$,$\tfrac 13$] or [$\tfrac1{2}$,$\tfrac12$] below. This case and its subcases below will be considered separately in sections \ref{RicciFlat} and \ref{Riemannzero} below. 

\subsubsection{Case [$\tfrac{\mathbf 2}{\mathbf 3}$,$\tfrac{\mathbf 1}{\mathbf 3}$]}
Universality order: 2. Ricci and Killing operators are zero. 1-Symmetric. 

Lie algebra is 4-step nilpotent. 

\subsubsection{Case [$\tfrac{\mathbf 1}{\mathbf 2}$,$\tfrac{\mathbf 1}{\mathbf 2}$]}
Universality order: 2. Ricci and Killing operators are zero. 1-Symmetric. 

Lie algebra is 3-step nilpotent. 

\subsubsection{Case [$\tfrac{\mathbf 2}{\mathbf 5}$,$\tfrac{\mathbf 1}{\mathbf 5}$]}
Universality order: 2. Ricci and Killing operators are zero. Riemann is zero.

Lie algebra is 3-step nilpotent. 

\subsubsection{Case [$\tfrac{\mathbf 1}{\mathbf 2}$,$\tfrac{\mathbf 1}{\mathbf 4}$]}
Universality order: 2. Ricci and Killing operators are zero. Riemann is zero. 

Lie algebra is 2-step nilpotent. 

\subsubsection{Case [$\tfrac{\mathbf 1}{\mathbf 2}$,0]}
Universality order: 2. Ricci and Killing operators are zero. Riemann is zero. 

Lie algebra is 2-step nilpotent. 

\subsubsection{Case [$\tfrac{\mathbf 1}{\mathbf 3}$,$\tfrac{\mathbf 1}{\mathbf 3}$]}
Universality order: 1. Ricci and Killing operators are zero. Riemann is zero. 

Lie algebra is 2-step nilpotent. 

\subsection{A class of Ricci flat metrics}
\label{RicciFlat} 
Let us now consider the Lie algebras with non-zero $C^a_{~bc}$ for only $3b_1+2b_1\leq -4$, i.e., Case  [$\tfrac3{4}$,$\tfrac12$], and its subcases. 
This implies the only possible non-zero components: 
\beq
&&(-2,1):~~C^2_{~14}, \qquad (-2,0): ~~C^2_{~1i}, \qquad (-2,-1):~~C^2_{13} \nonumber \\
&& (-1,-1): ~~C^i_{~13},~C^4_{~1i}, ~C^2_{~3i}, \qquad (-1,-2):~~C^4_{~13}, \qquad (0,-2):~~C^4_{~3i}.
\eeq
We notice that the Ricci tensor and Killing form are necessarily zero, hence it is Ricci flat. 

More specifically, $[\g,\g]:=\g_1\subset{\rm span}\{e_2,e_4,e_i\}$, and $\g_1$ is Abelian. 
Furthermore, $[\g,\g_1]:=\g_2\subset{\rm span}\{e_2,e_4\}$, and $[\g,\g_2]=\g_3\subset{\rm span}\{e_2\}$ and consequently, $\g$ is 4-step nilpotent. We note however, that if $C^2_{~41}=0$, then $\g_3=0$ and it is 3-step nilpotent. 

Checking the Jacobi identity,   the only components not being identically zero are: 
\[ (-2,-1): \quad C^2_{~14}C^4_{~3i} = 0.\]
Hence, we get: 
\[ C^2_{~14}=0 \quad \vee \quad C^4_{~3i} = 0 \] 

Computing the Riemann tensor we notice that this it not necessarily zero. However, the 3-step and 4-step cases give us: 
\begin{enumerate}
\item{} Case  [$\tfrac3{4}$,$\tfrac12$]: If $C^2_{~41}\neq 0$, and $C^4_{~3i}\neq 0$ then $\nabla^{(3)}\Riem=0$. As explained above, the general case is not allowed and Lie algebras have to be one of the subcases below. 
\item{} Case  [$\tfrac2{3}$,$\tfrac13$]:  If $C^2_{~41}\neq 0$, and $C^4_{~3i}=0$, then $\nabla\Riem=0$. 
\item{} Case  [$\tfrac1{2}$,$\tfrac12$]: If $C^2_{~41}=0$, then $\nabla\Riem=0$. \end{enumerate}
As an example of case [$\tfrac1{2}$,$\tfrac12$] take the following nilpotent Lie algebra called $\mathfrak{n}_{5,5}$ with left-invariant 1-forms: 
\beq
&&\theta^1=\d x, \qquad \theta^2=\d w+x\d z, \qquad \theta^3=\d y, \nonumber \\
&&\theta^4=\d v+y\d z+\tfrac 12y^2\d x, \qquad \theta^5=\d z+y\d x,
\eeq
with metric
\[ g=2\theta^1\theta^2+2\theta^3\theta^4+(\theta^5)^2.\]
Computing the Ricci, Riemann and $\nabla$Riemann tensor: 
\beq
\Ric=0, \qquad \Riem\neq 0, \qquad \nabla\Riem=0. 
\eeq
Indeed, in his Master's thesis, Markestad \cite{Markestad} has shown that all nilpotent indecomposable Lie algebras $\mathfrak{n}_{5,1}$-$\mathfrak{n}_{5,6}$ allow for a left-invariant metric of type [$\tfrac1{2}$,$\tfrac12$]. 

An example for the case [$\tfrac2{3}$,$\tfrac13$], we can take nilpotent algebra $\mathfrak{n}_{5,6}$ with the non-zero structure coefficients: 
\[ C^2_{~14}=C^5_{~13}=C^4_{15}=C^2_{~35}=1.\] 

\subsection{The Riemann flat cases}
\label{Riemannzero}
In this section we will consider the Lie algebras allowing for a flat metric, i.e., Riemann tensor is zero. 

The case [$\frac 25, \frac 15$], corresponds to a 3-step nilpotent Lie algebra. Up to dimension 4, there is only one such Lie algebra being properly 3-step nilpotent, namely $\mathfrak{n}_{4,1}$. Indeed, this provides with a 4-dimensional example of a flat metric: Choose
\[ C^2_{~14}=1 \quad C^4_{~13}=1, \]
we get a flat metric on $\mathfrak{n}_{4,1}$ of case [$\frac 25, \frac 15$]. 

For the case [$\frac 12, \frac 14$], is a 2-step nilpotent Lie algebra. Here, there are several 2-step nilpotent algebras in dimension 5. As an example, choose $\mathfrak{n}_{5,1}$ and the non-zero structure coefficients: 
\[ C^2_{~15}=1, \qquad C^4_{~13}=1. \] 
This gives an example of this case. 

Case  [$\frac 12, 0$], again a 2-step nilpotent Lie algebra, we can get an example by considering $\mathfrak{n}_{3,1}\oplus \mathbb{R}^{k+1}$. All Lie algebras with only non-zero structure coefficients being
\[ C^2_{~14}, ~C^2_{~1i},~C^2_{13}\] 
are examples of this Lie algebra. 

For the case [$\frac 13, \frac 13$], is a 2-step nilpotent Lie algebra, we can again get an example by considering $\mathfrak{n}_{3,1}\oplus\mathbb{R}^{k+1}$. This time with non-zero structure coefficients being: 
\[ C^4_{~13}, ~C^2_{~13}.\] 

\section{Signature $(3,3+k)$.} 
As the dimension of the null-directions increase the number of cases increases drastically. In signature $(3,3+k)$ we can represent each case with a triple [$x,y,z$] so that the components of the structure constants obey: 
\[ xb_1+yb_2+zb_3\leq -1.\] 
To list all cases would be too lengthy, but let us give some examples for some of the cases. Here, we assume the left-invariant frame is : $\{e_1,e_2,e_3,e_4, e_5,e_6,e_i\}$. Corresponding left-invariant 1-forms: $\{\theta^1,\theta^2,\theta^3,\theta^4, \theta^5,\theta^6,\theta^i\}$. 

Metric: 
\beq \label{metric33}
g=2\theta^1\theta^2+2\theta^3\theta^4+2\theta^5\theta^6+\sum^{6+k}_{i=7}(\theta^i)^2.\eeq
\subsection{Case $[3,1,1]$} 
\paragraph{Example $(\mathfrak{s}\mathfrak{l}(2,\mathbb{R})\oplusrhrim \mathbb{R}^2)\oplus\mathbb{R}^{k+1}$: }
An interesting example can now be found by extending the non-solvable example from case [3,1] to a non-trivial Levi decomposition. This give rise to an example given on the semi-direct sum: $(\mathfrak{s}\mathfrak{l}(2,\mathbb{R})\oplusrhrim \mathbb{R}^2)\oplus\mathbb{R}^{k+1}$. 
We let $\mathfrak{s}\mathfrak{l}(2,\mathbb{R})=\span\{e_1,e_3,e_4\}$ and $\mathbb{R}^2=\span\{e_2,e_6\}$, and so the representation of $\sl$ acts as a derivation on $\mathbb{R}^2=\span\{e_2,e_6\}$. For example, we can choose the following non-zero structure constants: 
\beq
&&(-1,2,0):~~ C^3_{~14}=1, \qquad (-1,1,1):~~ C^2_{~46}=1, \qquad (0,0,-1): ~~C^6_{~12}=-1, \nonumber \\
&&(0,-1,0):~~C^1_{~13}=2, ~~C^4_{~43}=-2, ~~C^2_{~32}=1, ~~C^6_{~36}=-1.
\eeq
Computing the Ricci tensor and the Killing form we get: 
\[ \Ric=6\theta^1\theta^4+2\theta^1\theta^6-5\theta^3\theta^3-\frac 52\theta^5\theta^5, \quad B=-10(\theta^1\theta^4-\theta^3\theta^3).\]
Here, the Ricci tensor is of Jordan type : ${\mathbf 4}\oplus {\mathbf 2}$, while the Killing form is of type  ${\mathbf 4}$.

\subsection{Case $[1,1,1]$} 
There are interesting examples of case $[1,1,1]$ also involving Lie algebras with non-trivial Levi decompositions. The following examples come from Markestad's Master thesis \cite{Markestad}. 

We consider the following completely null subspaces: 
\[ N_-=\span\{e_1,e_3,e_5\}, \qquad N_+=\span\{e_2,e_4,e_6\}.\]
 Let $\mathfrak{h}$ be one of the two 3-dimensional simple Lie algebras, and consider the semi-direct sum $\mathfrak{h}\oplusrhrim \mathbb{R}^3$ where we identify:
\[  N_-=\mathfrak{h},\qquad N_+=\mathbb{R}^3\] 
The semi-direct sum is defined by a representation $\mathsf{D}$ of $\mathfrak{h}$ acting for each $X\in \mathfrak{h}$ on $\mathbb{R}^3$ as a derivation, i.e.:
\[ \mathsf{D}_X:\mathbb{R}^3\rightarrow\mathbb{R}^3,\] 
is a derivation of $\mathbb{R}^3$. 
There are two cases to consider: 
\begin{enumerate} 
\item{} The representation $\mathsf{D}$ is trivial, i.e., it acts as the zero map. 
\item{} The representation $\mathsf{D}$ is non-trivial (in total three different non-isomorphic Lie algebras). 
\end{enumerate}
The first case has only non-trivial structure constants of the form: $C^-_{~--}$. Here a minus implies an odd index (i.e. in $N_-$). By lowering, this gives $C_{+--}$. Hence, these must obey $b_1+b_2+b_3=-1$, as required. 
By computing the Ricci tensor we get: 
\[ \Ric=-\frac 12B.\] 
The Ricci tensor is therefore degenerate on $N_-$, and of type ${\mathbf 2}\oplus{\mathbf 2}\oplus{\mathbf 2}$

For the second case when $\mathsf{D}$ is non-trivial, we get the additional non-zero structure coefficients given on the form: $C^+_{~-+}$. By lowering we get $C_{--+}$, again giving $b_1+b_2+b_3=-1$, as required. So this construction gives examples of metrics of case [1,1,1].

\paragraph{Higher dimensions} We note that this construction allows for generalisation to higher dimensions. Assume that the pseudo-Riemannian space is of signature $(p,p+k)$. Then we have two totally null subspaces: 
\[ N_-=\span\{e_1,e_3,..., e_{2p-1}\}, \qquad N_+=\span\{e_2,e_4,...,e_{2p}\}.\] 
Consider the semi-direct sum Lie algebra $\mathfrak{h}\oplusrhrim \mathbb{R}^{p+k}$, and let 
\[  N_-= \mathfrak{h},\qquad N_+=\mathbb{R}^p.\] 
This construction would thus give examples of case [1,1,...,1] in higher dimensions as well. 

\subsection{Generalisations with a non-abelian radical}  There are further generalisations pseudo-Riemannian metrics on non-trivial Levi decomposable Lie algebras. For example, in 6 dimensions we can use $\mathfrak{s}\mathfrak{l}(2,\mathbb{R})\oplusrhrim \mathfrak{n}_{3,1}$ with structure constants given the same as in eq.(\ref{sl2Rn}), $a\neq 0$, $b=1$, but with metric eq.(\ref{metric33}). This example thus provides us with an example of an indecomposable Lie algebra with non-abelian radical. 

Furthermore, we have also found an example of class [3,1,1] with the Levi decomposition  $\mathfrak{s}\mathfrak{l}(2,\mathbb{R})\oplusrhrim \mathfrak{s}_{3,1, a=1}$ in 6 dimensions with neutral signature.  Given the structure constants: 
\beq
(0,-1,0):&&~~C^6_{~36}=-2, \quad C^1_{~13}=-2, \quad C^4_{~34}=C^2_{~23}=C^4_{~12}=1, \nonumber \\
(-1,1,1):&& ~~C^3_{~16}=1, \quad C^2_{~46}=-1, \nonumber \\
(0,0,-1): && ~~C^2_{~25}=\alpha, \quad C^4_{~45}=\alpha, \quad \alpha\neq 0,
\eeq
with metric 6-dimensional neutral metric
\[ g=2\theta^1\theta^2+2\theta^3\theta^4+2\theta^5\theta^6.\]
Then the Ricci tensors and Killing form are 
\[ \Ric =-2\theta^1\theta^6-4\theta^3\theta^3+4\alpha\theta^3\theta^5-\alpha^2\theta^5\theta^5, \quad B=10(\theta^1\theta^6+\theta^3\theta^3)+2\alpha^2\theta^5\theta^5.\]

There are clearly more examples of this kind in higher dimensions. On the other hand, there are clear restrictions for their existence, but will be left for further study.

\section{Pseudo-Riemannian nil- and solvmanifolds} 
Consider a solvable (possibly nilpotent) Lie group, with Lie algebra $\g$, equipped with the pseudo-Riemannian metric
\[ g=2\sum_{i=1}^p\theta^{2i-1}\theta^{2i}+\sum_{i=2p+1}^{2p+k}(\theta^i)^2.\] 
Define the two totally null subspaces: 
\beq
N_-=\span\{e_1,e_3,e_5,...,e_{2p-1}\}, \quad N_+=\span\{e_2,e_4,e_6,...,e_{2p}\},
\eeq
and the space-like subspace:
\beq
H=\span\{e_{2p+1},...,e_{2p+k}\}.
\eeq
We can similarly identify the basis ${e_\mu}$ as a basis for $\g$, and $\theta^\mu$ as the corresponding left-invariant one-forms. 

Since the Lie algebra is solvable, the derived series: 
\[ \g_{0}:=\g. \quad \g_{i+1}:=[\g_i,\g_i] \] 
will terminate; i.e., there exists a $K$ so that $\g_K=0$. Define also the subspaces: 
\[ V_1:=\g\slash \g_1, \quad V_i:=\g_{i-1}\slash \g_i, ~i=2,...,K-1, \quad V_K:=\g_{K-1}. \]
We can now arrange each of these subspaces and identify them with the span of basis vectors $e_\mu$ in the following order: 
\beq
\{e_1,e_3,e_5,...,e_{2p-1}, e_{2p+1},...,e_{2p+k}, e_{2p},..., e_6,e_4,e_{2}\}\sim
V_1\oplus V_2\oplus ...\oplus V_{K-1}\oplus V_K.
\eeq
For example, if all $V_i$ are of dimension 2, then $V_1=\span\{e_1,e_3\}$, $V_2=\span\{e_5,e_7\}$, ..., $V_K=\span\{e_4,e_2\}$. 

We will make the further assumption  that: 
\beq
[H,H]\subset N_+. 
\eeq
Then, we can make a refinement of the spaces $\{V_i\}$  to $\{W^-_i,H,W^+_i\}$ in the following way: 
\beq
\{e_1,e_3,e_5,...,e_{2p-1}, e_{2p+1},...,e_{2p+k}, e_{2p},..., e_6,e_4,e_{2}\}\sim\{ W^-_1,W^-_2, ..., W^-_{\tilde{p}}, H, W^+_{\tilde p},..., W^+_2, W^+_1\} \nonumber 
\eeq
where  subspaces $W^\pm_i$ are null, $\dim(W_i^+)=\dim(W_i^-)$,  and each pair $(W^\pm_i,W^\pm_j)$, $i\neq j$ is mutually orthogonal. %except for $W^-_i$ and $W^+_i$. 

Under the following additional assumptions: 
\beq\label{eqW-W+}
\left[W^-_i,W^+_j\right]&&\subset
\begin{cases} 
 \span\{W^+_{j},...,W^+_1\}, & j\leq i, \\
 \span\{ W^-_{i+1},..,W^-_{\tilde{p}}, H,W^+_{\tilde{p}},...,W^+_1\} & j>i,
\end{cases} 
\\      
\left[H,W^+_j\right] &&
\subset \span\{W^+_{j-1},...,W^+_1\},\\
\label{eqW+W+}
\left[W^+_i,W^+_j\right]_{i\geq j} &&
\subset \span\{W^+_{j-1},...,W^+_1\},
\eeq
the Lie algebra $\mathfrak{g}$ is in the null cone $\mathcal{N}$. 

To show this, we define the coefficients, $x_i$, of the equation 
\[ x_1b_1+x_2b_2+...+x_{p}b_{p}\leq -1,\]
where each non-zero structure coefficient has boost weight $(b_1,...b_i,...,b_p)$. 
For all $b_k \in W^{\pm}_k$, the $x_k$'s are equal. Then, since the structure coefficients belong to a 3-index object, we define the weight of $W^-_{i}$ to be larger than the sum of any two weights $x_j$ with larger $j>i$, i.e., $x_i> x_j+x_k$, $i<j,k$. 

We can achieve this by choosing: $x_{\tilde{p}}=1$, then iteratively $x_{i-1}=2x_i+1$ (giving $x_i=2^{\tilde{p}+1-i}-1$). 
\subsection{All nilpotent Lie algebras are in the null cone of the split action.}
Let consider the case when the pseudo-Riemannian space is a nilmanifold, i.e., the Lie algebra is nilpotent. Then:
\begin{prop}\label{PropNil}
Given a nilpotent Lie algebra, $\mathfrak{n}$, then there exists a pseudo-Riemannian metric of signature $(p,p)$ if $\dim(\mathfrak{n})$ is even, and $(p,p+1)$ if $\dim(\mathfrak{n})$ is odd, on the corresponding nilmanifold so that $\mu$ is in the null cone $\mathcal{N}$.
\end{prop}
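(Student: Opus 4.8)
The plan is to realize any nilpotent Lie algebra $\mathfrak{n}$ concretely inside the framework of the previous subsection, so that the sufficient conditions \eqref{eqW-W+}--\eqref{eqW+W+} are automatically satisfied. First I would fix the lower central series of $\mathfrak{n}$, $\mathfrak{n}^{(1)}=\mathfrak{n}\supset\mathfrak{n}^{(2)}=[\mathfrak{n},\mathfrak{n}]\supset\cdots\supset\mathfrak{n}^{(K)}\supset\mathfrak{n}^{(K+1)}=0$, and choose a vector space grading $\mathfrak{n}=U_1\oplus U_2\oplus\cdots\oplus U_K$ with $\mathfrak{n}^{(j)}=U_j\oplus\cdots\oplus U_K$, so that $[U_i,U_j]\subset U_{i+1}\oplus\cdots\oplus U_K$ (in fact one can even arrange $[U_i,U_j]\subset\mathfrak{n}^{(i+j)}$ using the standard associated-graded construction, but the weaker statement suffices). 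Set $n=\dim\mathfrak{n}$. If $n$ is even put $p=n/2$ and $k=0$; if $n$ is odd put $p=(n-1)/2$ and $k=1$, so in both cases the total dimension is $n$ and the signature is $(p,p)$ or $(p,p+1)$ as claimed.

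The key idea is to split each graded piece $U_j$ into a ``$-$'' half and a ``$+$'' half and to line them up with the null basis. Concretely I would choose a vector space complement decomposition: write $\mathfrak{n}=N_-\oplus H\oplus N_+$ where $N_-=\span\{e_1,e_3,\dots,e_{2p-1}\}$, $N_+=\span\{e_2,e_4,\dots,e_{2p}\}$, $H=\span\{e_{2p+1}\}$ (empty if $k=0$), in such a way that the filtration $\mathfrak{n}^{(j)}$ restricts compatibly: one puts the ``top'' of the filtration (the last $U_K$, then $U_{K-1}$, \dots) into $N_+$, reading inward from $e_2, e_4,\dots$, and the ``bottom'' ($U_1$, then $U_2$,\dots) into $N_-$ reading from $e_1, e_3,\dots$, with the spill-over of the middle placed in $H$ and in whichever of $N_-,N_+$ still has room. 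The point of reversing the orientation on the $+$ side is exactly that made in the preceding subsection: a bracket $[U_i,U_j]$ lands strictly deeper in the filtration than both factors, i.e. strictly closer to the top, i.e. strictly toward the $e_2$ end of $N_+$; this is precisely the pattern of inclusions encoded in \eqref{eqW-W+}--\eqref{eqW+W+} (with the $W^{\pm}_i$ chosen as the pieces of $U_i$ on each side). Since $\mathfrak{n}$ is nilpotent, $[H,H]=0\subset N_+$ trivially, so the standing assumption of that subsection holds. It then follows that, with the weights $x_i=2^{\tilde p+1-i}-1$ constructed there (so that each $x_i$ strictly exceeds the sum of any two deeper weights), every nonzero structure constant $C^c_{\ ab}$ has boost weight satisfying $\sum_\ell x_\ell b_\ell\le -1$. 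By Theorem~\ref{thmRS}(3) and the explicit contraction $e^{tX}$ built from such an $X\in\mathfrak{p}$ (as spelled out after the permutation discussion), $\lim_{t\to\infty}e^{tX}\cdot\mu=0$, hence $\mu\in\mathcal N$.

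The only genuine content to verify is the combinatorial claim that one can partition the graded pieces $U_1,\dots,U_K$ into the slots $N_-$, $H$ (one slot, used only when $n$ is odd), $N_+$ so that (i) the dimensions match, i.e. $\dim N_-=\dim N_+=p$, (ii) no $U_j$ is split across the $N_-/N_+$ boundary in a way that violates monotonicity of the filtration on each side, and (iii) the resulting $W^\pm_i$ are mutually orthogonal pairs. For (i) and the orthogonality (iii), note that the metric \eqref{metric} pairs $e_{2i-1}$ with $e_{2i}$, so I would pair the $i$-th vector of $N_-$ with the $i$-th vector of $N_+$ and simply \emph{reorder} the basis of each $U_j$-piece on the $+$ side to be the metric-dual reverse of the $-$ side; since we have complete freedom in choosing the basis and the metric identification, this orthogonality is a bookkeeping matter, not a constraint. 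The real subtlety is that a single graded piece $U_j$ may have to straddle $N_-$ and $N_+$ (and, when $n$ is odd, possibly also contribute the single $H$ vector); one must check that such straddling is consistent with the one-sided filtration conditions \eqref{eqW-W+}. This works because within a fixed $U_j$ all brackets $[U_j,U_j]$ land in strictly deeper pieces $U_{>j}$, which by construction sit entirely on the $N_+$ side and deeper, so the ``diagonal'' piece $[W^-_j,W^+_j]\subset\span\{W^+_j,\dots,W^+_1\}$ required in \eqref{eqW-W+} reduces to $[W^-_j,W^+_j]\subset\span\{W^+_{j-1},\dots,W^+_1\}$, which holds since the target is in $U_{>j}$. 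I expect this straddling bookkeeping — showing that the filtration can always be folded into a metrically self-dual arrangement — to be the main obstacle; everything after it is an application of the machinery already set up in Propositions surrounding \eqref{eqW-W+}--\eqref{eqW+W+} and Theorem~\ref{thmRS}.
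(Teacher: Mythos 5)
Your proposal is correct and follows essentially the same route as the paper: both use the lower central series of $\mathfrak{n}$, pass to the quotients/graded pieces, refine them into the $\{W^-_i,H,W^+_i\}$ arrangement of the preceding subsection, observe that nilpotency makes conditions (\ref{eqW-W+})--(\ref{eqW+W+}) (and $[H,H]=0$) automatic, and conclude via the weights $x_i=2^{\tilde p+1-i}-1$ and Theorem~\ref{thmRS}. The only difference is that you spell out the straddling/pairing bookkeeping that the paper's three-sentence proof leaves implicit.
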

\begin{proof}
In this case we can define the sequence of subalgebras $\g_{0}:={\mathfrak n}$, and  $\g_{i+1}:=[\g_0,\g_i] $. Define then $V_i:=\g_{i-1}/\g_i$, and the refinements  $\{W^-_i,H,W^+_i\}$ as before. For the signatures in question $H$ is either of dimension 0, or 1, and consequently $[H,H]=0$. Similarly, due to the fact that $\g_{i+1}:=[{\mathfrak n},\g_i] $ all the other conditions eqs. (\ref{eqW-W+})-(\ref{eqW+W+}) are satisfied as well. 
\end{proof}

\subsection{Some solvable examples} 
Consider the following class of solvable extensions of nilpotent Lie algebras: 
Let $\mathfrak{s}=\mathfrak{a}\oplus\mathfrak{n}$, where $\mathfrak{a}$ is an abelian subalgebra, and $\mathfrak{n}$ is the nilradical. Then: 
\[ [\mathfrak{a},\mathfrak{a}]=0, \quad [ \mathfrak{a},\mathfrak{n}]\subset \mathfrak{n}, \quad  [ \mathfrak{n},\mathfrak{n}]\subset \mathfrak{n}.\] 
Furthermore, defining: 
\[  \mathfrak{g}_1:=[ \mathfrak{s}, \mathfrak{s}]\subset \mathfrak{n}, \qquad \mathfrak{g}_{i+1}:=  [ \mathfrak{g}_1, \mathfrak{g}_i]\subset\mathfrak{g}_i,   \] 
implies that $\mathfrak{g}_{i}$ terminates, and each $\mathfrak{g}_{i+1}$ is a proper subset of $\mathfrak{g}_{i}$. Let us also assume that the solvable Lie algebra has the following property:
\[  [ \mathfrak{a},\mathfrak{g}_i]\subset \mathfrak{g}_{i+1}.\]

By the above assumptions, we get the following result: 

\begin{prop}
Given a solvable extension, $\mathfrak{s}$,  of a nilpotent Lie algebra, with the assumptions given above, then there exists a pseudo-Riemannian metric of signature $(p,p)$ if $\dim(\mathfrak{s})$ is even, and $(p,p+1)$ if $\dim(\mathfrak{s})$ is odd, on the corresponding Lie group so that $\mu$ is in the null cone $\mathcal{N}$.
\end{prop}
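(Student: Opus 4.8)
The plan is to reduce this proposition to the setup already developed in the discussion surrounding equations (\ref{eqW-W+})--(\ref{eqW+W+}) and to Proposition \ref{PropNil}, by showing that a solvable extension $\mathfrak{s}=\mathfrak{a}\oplus\mathfrak{n}$ with the stated properties fits the general framework with an appropriate choice of the filtration and the refinement $\{W^\pm_i,H\}$. Concretely, I would take the descending chain $\mathfrak{g}_0:=\mathfrak{s}\supset\mathfrak{g}_1=[\mathfrak{s},\mathfrak{s}]\supset\mathfrak{g}_2=[\mathfrak{g}_1,\mathfrak{g}_1]\supset\cdots$ (note: using $\mathfrak{g}_{i+1}=[\mathfrak{g}_1,\mathfrak{g}_i]$ as in the text, which still terminates since $\mathfrak{n}$ is nilpotent and $\mathfrak{g}_1\subset\mathfrak{n}$), set $V_1:=\mathfrak{s}/\mathfrak{g}_1$, $V_i:=\mathfrak{g}_{i-1}/\mathfrak{g}_i$, and then build the null refinement $W^-_1,\dots,W^-_{\tilde p},H,W^+_{\tilde p},\dots,W^+_1$ exactly as in the general construction, arranging that $\mathfrak{a}$ together with a complementary null piece of $\mathfrak{n}/\mathfrak{g}_1$ sits in the top layer $W^\pm_1$.

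The key steps, in order, are: (1) verify that the signature bookkeeping works — i.e. that one can pair up the basis vectors into mutually orthogonal null pairs plus at most one spacelike direction $H$, so that the metric is of type $(p,p)$ or $(p,p+1)$ depending on the parity of $\dim\mathfrak{s}$; this is identical to the argument in Proposition \ref{PropNil} since $\dim H\in\{0,1\}$ forces $[H,H]=0$ automatically. (2) Check the bracket conditions (\ref{eqW-W+})--(\ref{eqW+W+}): the inclusions $[\mathfrak{n},\mathfrak{n}]\subset\mathfrak{n}$, $\mathfrak{g}_{i+1}\subset\mathfrak{g}_i$ and the terminating chain handle the brackets among the $W^\pm_i$ coming from $\mathfrak{n}$, just as in the nilpotent case; the new ingredient is the abelian part $\mathfrak{a}$, and here the hypothesis $[\mathfrak{a},\mathfrak{a}]=0$ together with the crucial assumption $[\mathfrak{a},\mathfrak{g}_i]\subset\mathfrak{g}_{i+1}$ is precisely what is needed to ensure that brackets involving the $W^-_1$-slot (which houses $\mathfrak{a}$) land in strictly deeper layers, matching the filtration-shift required by (\ref{eqW-W+}). (3) Invoke the general statement proven before Proposition \ref{PropNil} — that any Lie algebra fitting (\ref{eqW-W+})--(\ref{eqW+W+}) is in $\mathcal{N}$ via the explicit grading element $X\in\mathfrak{p}$ with $x_i=2^{\tilde p+1-i}-1$ — to conclude $\mu\in\mathcal{N}$.

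The main obstacle I anticipate is step (2), and specifically the placement of $\mathfrak{a}$: one must check that $\mathfrak{a}$ can be consistently assigned to the \emph{first} null layer $W^-_1$ (or split between $W^-_1$ and $W^+_1$) without violating any of the three inclusions. The subtlety is that $[\mathfrak{a},\mathfrak{n}]$ need not be inside any particular $\mathfrak{g}_i$ a priori — only $[\mathfrak{a},\mathfrak{g}_i]\subset\mathfrak{g}_{i+1}$ is assumed — so one has to be careful about how the \emph{complement} of $\mathfrak{g}_1$ inside $\mathfrak{n}$ (which also lives partly in $W^\pm_1$) interacts with $\mathfrak{a}$; I would handle this by choosing the decomposition of $V_1$ so that $\mathfrak{a}$ and the $\mathfrak{n}$-complement occupy the $W^-_1$ and $W^+_1$ slots in a way compatible with $[\mathfrak{a},\mathfrak{a}]=0$ and with $[\mathfrak{a},(\text{complement})]\subset\mathfrak{g}_1\subset W^-_2\oplus\cdots$. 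Everything else is routine once the layering is fixed, and the explicit $X$ from the general construction then does the contraction.

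\begin{proof}
Set $\mathfrak{g}_0:=\mathfrak{s}$, $\mathfrak{g}_1:=[\mathfrak{s},\mathfrak{s}]$, and $\mathfrak{g}_{i+1}:=[\mathfrak{g}_1,\mathfrak{g}_i]$. Since $\mathfrak{g}_1\subset\mathfrak{n}$ and $\mathfrak{n}$ is nilpotent, this chain terminates, with each $\mathfrak{g}_{i+1}$ a proper subset of $\mathfrak{g}_i$ by assumption. Put $V_1:=\mathfrak{s}/\mathfrak{g}_1$ and $V_i:=\mathfrak{g}_{i-1}/\mathfrak{g}_i$ for $i\geq 2$. We build the metric exactly as in the general construction preceding Proposition \ref{PropNil}: we arrange the basis so that the deepest layers $V_K, V_{K-1},\dots$ are identified with $W^+$-type null directions, the top layer $V_1$ is split so that $\mathfrak{a}$ and a complement of $\mathfrak{g}_1$ in $\mathfrak{s}$ are placed into $W^-_1$ (and, if needed for dimension reasons, partly into $W^+_1$ and the at-most-one-dimensional spacelike slot $H$), and the intermediate $V_i$ are split into mutually orthogonal null pairs $W^-_i\oplus W^+_i$. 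Since $\dim(H)\in\{0,1\}$ we automatically have $[H,H]=0$; the signature is $(p,p)$ when $\dim(\mathfrak{s})$ is even and $(p,p+1)$ when it is odd.

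It remains to check the inclusions (\ref{eqW-W+})--(\ref{eqW+W+}). The brackets among the $\mathfrak{n}$-parts of the layers satisfy the required filtration shifts exactly as in the proof of Proposition \ref{PropNil}, using $[\mathfrak{n},\mathfrak{n}]\subset\mathfrak{n}$, $\mathfrak{g}_{i+1}\subset\mathfrak{g}_i$, and the termination of the chain. For brackets involving $\mathfrak{a}\subset W^-_1$: the hypothesis $[\mathfrak{a},\mathfrak{a}]=0$ handles $[W^-_1,W^-_1]$ restricted to $\mathfrak{a}$; the hypothesis $[\mathfrak{a},\mathfrak{g}_i]\subset\mathfrak{g}_{i+1}$ ensures that $[\mathfrak{a},W^{\pm}_i]$ (for $i\geq 2$) lands in $\mathfrak{g}_{i+1}$, hence in the span of the strictly deeper layers $W^+_{i},\dots,W^+_1$, as required by the first line of (\ref{eqW-W+}); and the complement of $\mathfrak{g}_1$ in $\mathfrak{s}$, together with $\mathfrak{a}$, maps under the bracket into $\mathfrak{g}_1\subset W^-_2\oplus\cdots\oplus W^+_1$, consistent with the second line of (\ref{eqW-W+}). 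Thus all of (\ref{eqW-W+})--(\ref{eqW+W+}) hold.

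By the general argument preceding Proposition \ref{PropNil}, choosing $x_{\tilde p}=1$ and iteratively $x_{i-1}=2x_i+1$ (so $x_i=2^{\tilde p+1-i}-1$) produces an element $X\in\mathfrak{p}$ with $\lim_{t\to\infty}e^{tX}\cdot\mu=0$. Hence $\mu\in\mathcal{N}$.
\end{proof}
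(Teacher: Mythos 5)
Your proposal follows essentially the same route as the paper: the paper's (very terse) proof likewise takes the filtration $\mathfrak{g}_1=[\mathfrak{s},\mathfrak{s}]$, $\mathfrak{g}_{i+1}=[\mathfrak{g}_1,\mathfrak{g}_i]$, forms $V_i=\mathfrak{g}_{i-1}/\mathfrak{g}_i$ and the refinement $\{W^-_i,H,W^+_i\}$, notes that $\dim H\le 1$ forces $[H,H]=0$, and invokes the general construction with $x_i=2^{\tilde p+1-i}-1$. Your write-up simply spells out in more detail (correctly) how the hypotheses $[\mathfrak{a},\mathfrak{a}]=0$ and $[\mathfrak{a},\mathfrak{g}_i]\subset\mathfrak{g}_{i+1}$ feed into the conditions (\ref{eqW-W+})--(\ref{eqW+W+}).
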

\begin{proof}\label{PropSplit}
We note that the series  defined above, $\mathfrak{g}_i$ the spaces $V_i:=\mathfrak{g}_{i-1}/\mathfrak{g}_i$ implies that $[V_i,V_j]_{i\leq j}\subset \span\{V_{j+1},...,V_K\} $. Moreover, since $H$ must be of dimension 1 or 0, $[H,H]=0$. The corresponding refinement \newline $\{ W^-_1,W^-_2, ..., W^-_{\tilde{p}}, H, W^+_{\tilde p},..., W^+_2, W^+_1\}$ will thus fullfill the requirements above. 
\end{proof}

This result gives a sufficient condition for the split signature case. However, many solvable algebras allow for other signatures as well. As an extreme example of this is when the nilpotent group is actually abelian. Then the Lorentzian signature suffices (and all other signatures as well). 

As an example in the other end of the spectrum, consider a filiform nilradical $\mathfrak{n}$ and a solvable extension $\mathfrak{s}=\mathfrak{a}\oplus\mathfrak{n}$  with  $\dim(\mathfrak{a})=1$ and $[\mathfrak{s},\mathfrak{s}]=\mathfrak{n}$. Then each $V_{i}=\mathfrak{g}_{i-1}/\mathfrak{g}_{i}$  has dimension 1, except for $\dim(V_{2})=2$. These examples provide candidates for examples the signature in the proposition is sharp and 
the obvious candidates to saturate the equation: 
\[ \left(2^p-1\right)b_1+...+\left(2^{p+1-i}-1\right)b_i+...+3b_{p-1}+b_p\leq -1. \]  

\subsection{Are all solvable Lie algebras in the null cone of some $O(p,q)$ action?} 
The answer is no\footnote{This seems to be because we are considering Lie algebras and orbits over $\mathbb{R}$. Over $\mathbb{C}$ all solvable Lie algebras are completely solvable, and hence using the nullcone of $O(n,\mathbb{C})$, seem to fall under the spell of the Prop. \ref{PropCSol} below.}. Before we give examples which are not let us investigate this question a bit more carefully and give a sufficient condition for when a solvable Lie algebra is in the null cone. 
Consider a solvable Lie algebra $\mathfrak{s}$. Then by Prop. 1.23 in \cite{Knapp} there will exist a sequence of subalgebras
\[ \mathfrak{s}=\mathfrak{a}_0\supset \mathfrak{a}_1\supset \dots\mathfrak{a}_i\supset \dots \mathfrak{a}_n=0,\]
such that each $i$, $\mathfrak{a}_{i+1}$ is an ideal of $\mathfrak{a}_i$, and $\dim(\mathfrak{a}_{i+1}/\mathfrak{a}_i)=1$. 
This is called an \emph{elementary sequence} and to such a sequence we can clearly try to apply the construction given in the proof of Prop.\ref{PropSplit}. On the other hand, what seems to be critical is the requirement 
\beq\
\left[W^-_i,W^+_j\right]\subset 
 \span\{W^+_{j},...,W^+_1\}, \quad j\leq i.
\eeq

On the other hand, for \emph{completely solvable} Lie algebras \cite{Knapp}, there exists an elementary sequence of ideals and for such Lie algebra, the above requirement can be fulfilled. Hence: 
\begin{prop}\label{PropCSol}
Given a completely solvable Lie algebra, $\mathfrak{s}$, then there exists a pseudo-Riemannian metric of signature $(p,p)$ if $\dim(\mathfrak{s})$ is even, and $(p,p+1)$ if $\dim(\mathfrak{s})$ is odd, on the corresponding Lie group so that $\mu$ is in the null cone $\mathcal{N}$.
\end{prop}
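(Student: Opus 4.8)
\textbf{Proof plan for Proposition \ref{PropCSol}.}

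The plan is to reduce Proposition \ref{PropCSol} to the machinery already developed in the proof of Proposition \ref{PropSplit}, the only new ingredient being the existence of an elementary sequence consisting of \emph{ideals} (not merely subalgebras) for a completely solvable Lie algebra. First I would recall the definition: a solvable Lie algebra $\mf{s}$ is completely solvable if $\mathrm{ad}_X$ has only real eigenvalues for every $X\in\mf{s}$; equivalently (by the reference \cite{Knapp}) there is a chain $\mf{s}=\mf{a}_0\supset\mf{a}_1\supset\cdots\supset\mf{a}_n=0$ with each $\mf{a}_{i+1}$ an ideal \emph{of $\mf{s}$}, with one-dimensional successive quotients. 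This stronger flag is what allows the critical condition $[W^-_i,W^+_j]\subset\span\{W^+_j,\dots,W^+_1\}$ for $j\le i$ to be met, which (as noted in the text just before the statement) is precisely the obstruction for a general solvable algebra.

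Next I would set up the grading. Using the elementary sequence of ideals, put $V_i:=\mf{a}_{i-1}/\mf{a}_i$, each one-dimensional. Because every $\mf{a}_i$ is an ideal of $\mf{s}$, we get $[\mf{s},\mf{a}_i]\subset\mf{a}_i$, hence $[\mf{a}_j,\mf{a}_i]\subset\mf{a}_{\max(i,j)}$, which on the level of quotients gives $[V_j,V_i]\subset\span\{V_{\max(i,j)+1},\dots\}$ — a genuine grading in which the bracket strictly raises the index. I would then choose the null frame so that, reading the ordered basis
\[
\{e_1,e_3,\dots,e_{2p-1},\,e_{2p+1},\dots,e_{2p+k},\,e_{2p},\dots,e_4,e_2\}\sim\{W^-_1,\dots,W^-_{\tilde p},H,W^+_{\tilde p},\dots,W^+_1\},
\]
each $W^\pm_i$ is a union of consecutive one-dimensional blocks $V_\bullet$, with $\dim W^-_i=\dim W^+_i$ and the pairing arranged so that $W^-_i$ is dual to $W^+_i$ under $g$; since for the target signatures $H$ is $0$- or $1$-dimensional, $[H,H]=0$ automatically. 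The strict-raising property of the flag then yields exactly the inclusions (\ref{eqW-W+})–(\ref{eqW+W+}): the key line $[W^-_i,W^+_j]\subset\span\{W^+_j,\dots,W^+_1\}$ for $j\le i$ follows because the bracket of a block in $W^-_i$ with one in $W^+_j$ lands in a block with index at least that of the $W^+_j$-block, i.e.\ on the ``$+$'' side no lower than $W^+_j$; and for $j>i$ the weaker containment in $\span\{W^-_{i+1},\dots,W^-_{\tilde p},H,W^+_{\tilde p},\dots,W^+_1\}$ holds for the same reason. With these verified, the construction preceding the statement — choosing $x_{\tilde p}=1$ and $x_{i-1}=2x_i+1$, so that $x_i=2^{\tilde p+1-i}-1$, and invoking that every nonzero structure constant then satisfies $x_1b_1+\cdots+x_pb_p\le-1$ — exhibits an $X\in\mf p$ with $\lim_{t\to\infty}e^{tX}\cdot\mu=0$, placing $\mu\in\mathcal{N}$ by Theorem \ref{thmRS}.

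The main obstacle I anticipate is purely combinatorial bookkeeping: showing that the refinement $\{W^-_i,H,W^+_i\}$ can actually be extracted from the one-dimensional flag $\{V_i\}$ in a way that is simultaneously (i) compatible with the metric (\ref{metric}) — i.e.\ $W^-_i$ and $W^+_i$ are totally null and mutually dual, and distinct pairs are mutually orthogonal — and (ii) consistent with the grading so that brackets raise the index. The subtlety is that the flag $\mf{a}_i$ is intrinsic to $\mf{s}$ while the null structure is something we impose; one must check there is enough freedom (parity of $\dim\mf{s}$ dictating signature $(p,p)$ versus $(p,p+1)$, with the single extra spacelike direction absorbed into $H$) to interleave the $V_i$'s into the $W^\pm$ pattern without violating the inclusions. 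Once the interleaving is pinned down the rest is a direct transcription of the argument of Proposition \ref{PropSplit}, so I would spend the bulk of the written proof on this indexing step and merely cite the earlier construction for the contraction itself.
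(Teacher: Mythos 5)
Your overall route is the same as the paper's: take the elementary sequence, which for a completely solvable algebra consists of ideals of $\mf{s}$ with one-dimensional quotients, feed the resulting flag into the $\{W^-_i,H,W^+_i\}$ refinement, verify the inclusions (\ref{eqW-W+})--(\ref{eqW+W+}), and invoke the contraction with $x_i=2^{\tilde p+1-i}-1$ and Theorem \ref{thmRS}. (The paper itself gives only the brief paragraph preceding the statement, so structurally you are aligned with it.) However, your argument rests on one concretely false step: from $[\mf{s},\mf{a}_i]\subset\mf{a}_i$ you conclude $[V_j,V_i]\subset\span\{V_{\max(i,j)+1},\dots\}$, i.e.\ that the bracket \emph{strictly} raises the flag index. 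The ideal property only gives the non-strict containment $[\mf{a}_{j-1},\mf{a}_{i-1}]\subset\mf{a}_{\max(i,j)-1}=\span\{V_{\max(i,j)},V_{\max(i,j)+1},\dots\}$. Strict raising for all pairs would force $\mf{s}$ to be nilpotent, which completely solvable algebras need not be; already for $[x,a]=a$ with the flag $\mf{s}\supset\span\{a\}\supset 0$ one has $[V_1,V_2]=V_2$, not $0$.

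This matters because the required inclusions split into two kinds. The condition $[W^-_i,W^+_j]\subset\span\{W^+_j,\dots,W^+_1\}$ (and its $j>i$ companion) does follow from the non-strict containment, since $\span\{W^+_j,\dots,W^+_1\}$ is itself one of the ideals $\mf{a}_m$; the positive weight $x_i\ge 1$ carried by the $W^-_i$ factor absorbs the lack of strictness. But the conditions $[H,W^+_j]\subset\span\{W^+_{j-1},\dots,W^+_1\}$ and (\ref{eqW+W+}) genuinely require the bracket to land \emph{strictly} deeper: a component of $[W^+_i,W^+_j]$ surviving in $W^+_j$ has boost weight with $x\cdot b=+x_i>0$ (and a component of $[H,W^+_j]$ surviving in $W^+_j$ has $x\cdot b=0$), so it fails $x\cdot b\le -1$ and does not contract away. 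An arbitrary elementary sequence of ideals does not guarantee this; a priori the bottom half of the flag need not even be a nilpotent subalgebra. To close the gap you must choose the flag more carefully: for instance, arrange that its bottom $\lceil n/2\rceil$-dimensional piece lies inside the nilradical $\mf{n}$ (possible since $\dim\mf{n}\ge\tfrac12\dim\mf{s}$ for solvable $\mf{s}$) and that the flag refines the lower central series of $\mf{n}$ by $\mf{s}$-ideals (possible by complete solvability, acting with real eigenvalues on each graded piece); then $[\mf{a}_l,\mf{a}_m]\subset[\mf{n},\mf{a}_{\max(l,m)}]\subset\mf{a}_{\max(l,m)+1}$ on that bottom half, which is exactly the strictness you need. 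With that repair, the interleaving and the weight count go through as in Proposition \ref{PropSplit}.
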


To find solvable Lie algebras which \emph{do not} lie in the null cone it is sufficient to consider dimension 3 with Lorentzian signature (1,2). Consider the Lie algebras of Euclidean motions of the plane ($\alpha =0$), or a subalgebra of the homothetic motions ($\alpha>0$) of the plane: 
\beq
{\mathfrak s}_{3,3}:\quad [e_2,e_1]=\alpha e_2-e_3, \qquad  [e_3,e_1]=e_2+\alpha e_3.
\eeq
These three-dimensional Lie algebras do not lie in any null cone. To see this we note that $\g_1:=[\g,\g]=\span\{e_2,e_3\}$. Consider the endomorphism ${\sf A}: \g_1\rightarrow\g_1$ defined by ${\sf A}(X):=[e_1,X]$. According to section \ref{sect:typeIII}, ${\sf A}$ necessarily has 2 real eigenvalues. However, for ${\mathfrak s}_{3,3}$ the eigenvalues are necessarily \emph{complex}. Since the Lorentzian case is the only possibility for non-trivial null cone in dimension 3, ${\mathfrak s}_{3,3}$ cannot be in any null cone. 

In higher dimensions such examples not lying in the null cone are more difficult to find, and, in particular, the author has not been able to find solvable examples of this kind in even dimensions. 
So the question remains: \\
\begin{q}
Given a solvable Lie algebra, $\mathfrak{s}$. When does there exist a pseudo-Riemannian metric of signature $(p,p+k)$ on the corresponding solvmanifold so that $\mu$ is in the null cone $\mathcal{N}$?
\end{q}

\section{The structure of the pseudo-Riemannian manifolds}
It is clear that all the left-invariant metrics on the Lie groups in the null cone has a certain degenerate structure. Consider the general metric in signature $(p,p+k)$ in the null cone. Then define the two totally null subspaces: 
\beq
N_-=\span\{e_1,e_3,e_5,...,e_{2p-1}\}, \quad N_+=\span\{e_2,e_4,e_6,...,e_{2p}\},
\eeq
and the space-like subspace
\beq
H=\span\{e_{2p+1},...,e_{2p+k}\},
\eeq
as before. Then the null-distribution $\mathcal{D}:=N_+$ as well as its orthogonal complement $\mathcal{D}^{\bot}:=N_+\oplus H$, are \emph{integrable}. This can be seen since: 
\[ [N_+,N_+]\subset N_+, \qquad [N_+,H]\subset N_+, \quad [H,H]\subset N_+.\] 
This can be seen since the structure constants $C^a_{~++}=C^a_{~i+}=C^a_{~ij}=0$, for $a=\{-, i\}$ and $i,j=2p+1,...,2p+k$,  fullfill
\beq \label{ineq}
 x_1b_1+x_2b_2+...+x_pb_p\leq -1.
 \eeq
Indeed, from the same vanishing structure constants we also note we have a codimension one sequence, $\mathcal{D}_i$, of integrable distributions, so that
\[ \mathcal{D}\subset \mathcal{D}_1\subset...\subset \mathcal{D}_{k-1}\subset\mathcal{D}_k=\mathcal{D}^{\bot}. \] 
This implies that we can introduce coordinates at each level related to the integrable structure. Indeed, by checking the condition $ [N_+,N_+]\subset N_+$ and requiring eq.(\ref{ineq}) where the $b_i$ are reversely well-ordered, then we get that 
\[ C^k_{~ij}\neq 0 \quad \Rightarrow k<i,j,\]
where $i,j,k=2,4,...,2p$.  This implies that there is a descending sequence: 
\[ \mathcal{D}\supset\mathcal{D}_{-1}\supset...\supset\mathcal{D}_{-\tilde{p}}=0.  \] 
From a Lie algebra perspective, this implies that the complement $\mathcal{D}^{\bot}$ forms a \emph{nilpotent subalgebra}. Hence, this can be extended to the following result: 
\begin{prop}
Assume that the Lie algebra $\mu$, is in the null cone $\N$. Then there exists a pseudo-Riemannian metric so that the distribution $\mathcal{D}^{\bot}:=N_+\oplus H$ is a nilpotent Lie subalgebra. Consequently, there exists a codimension 1 sequence of integrable distributions: 
\[0=\mathcal{D}_{-p}\subset \mathcal{D}_{-p+1}\subset ...\subset\mathcal{D}_{-1}\subset\mathcal{D}\subset \mathcal{D}_1\subset ...\subset \mathcal{D}_k=\mathcal{D}^\bot.\]
\end{prop}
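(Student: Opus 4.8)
The plan is to extract, from the hypothesis $\mu\in\N$, a metric for which the boost-weight structure forces the bracket to respect a flag of integrable distributions, and then to translate this flag into the statement. The starting point is Theorem~\ref{thmRS}(3): since $\mu\in\N$, there is an $X\in\mathfrak p$ with $\lim_{t\to\infty}e^{tX}\cdot\mu=0$. As explained in Section~\ref{boostweightdecomp}, after passing to the representative in $\mathcal V/\sigma$, this $X$ corresponds to non-negative, reversely well-ordered coefficients $x_1\ge x_2\ge\cdots\ge x_p\ge 0$ such that every non-zero structure constant $C^c_{~ab}$ has boost weight $(b_1,\dots,b_p)$ obeying $x_1b_1+\cdots+x_pb_p\le -1$. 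I would fix this metric and this inequality once and for all; replacing $\mu$ by $e^{t_0X}\cdot\mu$ for a fixed $t_0$ (equivalently, working with the pseudo-Riemannian structure it represents) does not change whether we are in the null cone, so we may as well assume the bracket components satisfy \eqref{ineq}.

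Next I would verify the two integrability facts stated in the text. For $\mathcal D=N_+$ and $\mathcal D^\bot=N_+\oplus H$, integrability amounts to $[N_+,N_+]\subset N_+$, $[N_+,H]\subset N_+$, and $[H,H]\subset N_+$; i.e. the components $C^a_{~++}$, $C^a_{~i+}$, $C^a_{~ij}$ vanish for $a$ a ``$-$'' index or an $H$-index and $i,j\in\{2p+1,\dots,2p+k\}$. Each such component has a boost weight whose left-hand side $x_1b_1+\cdots+x_pb_p$ is non-negative (the two factors in the bracket contribute non-negative weights, being in $N_+$ or $H$, while the output index $a$ being ``$-$'' or ``$H$'' adds a further non-negative amount), so it cannot satisfy $\le -1$ and hence must be zero. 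The same weight bookkeeping gives the finer statement: among the ``$+$''-indices $2,4,\dots,2p$, well-orderedness of the $b_i$ forces $C^k_{~ij}\neq 0\Rightarrow k<i,j$, which is exactly the assertion that the bracket on $N_+\oplus H$ is \emph{nilpotent} — each generator raises the position in the ordered basis, so iterated brackets eventually vanish. This is the heart of the argument, and I expect the main (mild) obstacle to be purely notational: carefully setting up which index positions carry which $x_i$-weight (recalling that the $H$-indices carry weight $0$) so that the sign of $x_1b_1+\cdots+x_pb_p$ can be read off unambiguously for each of the relevant bracket components.

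Finally I would assemble the flag. Having shown $\mathfrak h:=\mathcal D^\bot=N_+\oplus H$ is a nilpotent subalgebra with a distinguished ordered basis in which the bracket strictly raises indices, the lower central series of $\mathfrak h$ — or equivalently the descending sequence $\mathcal D\supset\mathcal D_{-1}\supset\cdots\supset\mathcal D_{-\tilde p}=0$ built from the ``$+$''-index filtration, together with the ascending $\mathcal D\subset\mathcal D_1\subset\cdots\subset\mathcal D_k=\mathcal D^\bot$ obtained by adjoining the $H$-directions one at a time — produces a codimension-one sequence of subspaces. Each of these is an ideal (or at least a subalgebra closed under bracketing with the relevant directions) by the same vanishing-structure-constant argument, hence defines an integrable distribution on the Lie group by the left-invariant Frobenius criterion. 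Re-indexing this chain as
\[
0=\mathcal D_{-p}\subset\mathcal D_{-p+1}\subset\cdots\subset\mathcal D_{-1}\subset\mathcal D\subset\mathcal D_1\subset\cdots\subset\mathcal D_k=\mathcal D^\bot
\]
(absorbing $\tilde p$ into $p$ and padding with repeats if some graded pieces are trivial) gives precisely the claimed sequence, completing the proof.
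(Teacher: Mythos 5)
Your proposal is correct and follows essentially the same route as the paper: the paper's own justification is precisely the boost-weight bookkeeping in the paragraphs preceding the proposition — the components $C^a_{~++}$, $C^a_{~i+}$, $C^a_{~ij}$ with $a$ a minus- or $H$-index vanish because their boost weights cannot satisfy $x_1b_1+\cdots+x_pb_p\le -1$, and the reverse well-ordering of the $x_i$ gives $C^k_{~ij}\neq 0\Rightarrow k<i,j$ among the $+$-indices, whence nilpotency of $N_+\oplus H$ and the flag of integrable distributions. The only cosmetic slip is directional: the bracket \emph{lowers} the $+$-index position ($k<i,j$) rather than raising it, which does not affect the termination argument.
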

As an immediate corollary, we get the following: 
\begin{cor}\label{CorNil}
Assume that a $(2p+k)$-dimensional Lie algebra $\g$ is in the null cone under an $O(p,p+k)$-action, as described here. Then  $\g$ has a nilpotent subalgebra of dimension $(p+k)$. 
\end{cor}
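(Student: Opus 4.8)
The plan is to derive Corollary \ref{CorNil} as a direct consequence of the Proposition that immediately precedes it. The key observation is that the preceding Proposition already asserts the existence of a pseudo-Riemannian metric (in the $O(p,p+k)$ signature) for which the distribution $\mathcal{D}^{\bot} := N_+ \oplus H$ is a nilpotent Lie subalgebra of $\g$. So the bulk of the work has been carried out; the corollary is essentially a dimension count.

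First I would recall the setup: for signature $(p,p+k)$ with $p \leq p+k$, the underlying vector space $V$ has dimension $2p+k$, and it splits as $V = N_- \oplus N_+ \oplus H$ where $N_\pm = \span\{e_1,e_3,\dots,e_{2p-1}\}$ and $\span\{e_2,e_4,\dots,e_{2p}\}$ respectively, each of dimension $p$, and $H = \span\{e_{2p+1},\dots,e_{2p+k}\}$ of dimension $k$. Then I would simply note that $\dim \mathcal{D}^{\bot} = \dim N_+ + \dim H = p + k$. Invoking the Proposition, $\mathcal{D}^{\bot}$ is a nilpotent subalgebra, and it has the asserted dimension $p+k$. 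That completes the argument.

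The only subtlety to address — and I would make it explicit in the write-up — is the re-parametrisation of the signature: the corollary is phrased for a $(2p+k)$-dimensional Lie algebra under an $O(p,p+k)$ action, which matches exactly the $(p,q)$ notation with $q = p+k$ used throughout, and $p \leq q$ is the standing assumption. So there is no hidden case distinction. One might also remark that the statement is consistent with (and refines) Corollary \ref{CorSimple}: a semi-simple Lie algebra of dimension $2p+k$ cannot contain a nilpotent subalgebra of dimension $p+k$ once $p+k$ exceeds the maximal dimension of a nilpotent (nilpotency of a subalgebra forces it to be a proper subalgebra for semi-simple $\g$), but I would keep the proof itself to the one-line dimension count.

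Since essentially all the content is in the preceding Proposition, there is no real obstacle; the ``hard part'' is purely bookkeeping — making sure the dimension of $N_+ \oplus H$ is correctly identified as $p+k$ and that the signature labels line up. If I wanted to be fully self-contained I would also sketch why $\mathcal{D}^\bot$ is nilpotent: the vanishing structure constants $C^a_{~++} = C^a_{~i+} = C^a_{~ij} = 0$ for $a \in \{-,i\}$ (forced by the null-cone inequality \eqref{ineq}) show $[\mathcal{D}^\bot,\mathcal{D}^\bot] \subset N_+$, and the reverse-well-ordering of the $b_i$ forces $C^k_{~ij} \neq 0 \Rightarrow k < i,j$ among the even indices, giving the descending chain $\mathcal{D} \supset \mathcal{D}_{-1} \supset \dots \supset \mathcal{D}_{-\tilde p} = 0$ that witnesses nilpotency — but this is exactly the content of the Proposition, which I am entitled to assume.
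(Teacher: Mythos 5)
Your proof is correct and is exactly the paper's argument: the corollary is stated there as an ``immediate corollary'' of the preceding Proposition, with the only content being the dimension count $\dim(N_+\oplus H)=p+k$. Nothing further is needed.
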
 
%\section{Conclusion and Outlook} 

\appendix
\section{Structure constants for signature $(2,2+k)$.} 
\label{Cabc}
In the null cone, the following structure constants may not be zero: 
\beq
&&(-1,2): ~~C^3_{~14}, \qquad (-1,1): ~~C^2_{~4i}, ~C^3_{~1i},~C^i_{~14} \qquad (-2,1):~~C^2_{~41}, \qquad (-2,0): ~~C^2_{~1i}, \nonumber \\ 
&& (-1,0): ~~C^2_{~ij},~C^i_{~1j},~ C^2_{~12},~C^2_{~34}, ~C^3_{~13}, ~C^4_{~14}, \qquad (-2,-1):~~C^2_{13} \qquad \nonumber \\
&& (-1,-1): ~~C^i_{~13},~C^4_{~1i}, ~C^2_{~3i},\qquad (0,-1): ~~C^4_{~ij},~C^i_{~3j}, ~C^4_{~34}, ~C^4_{~12},~C^1_{~13},~C^2_{~23},  \nonumber \\
&& (-1,-2):~~C^4_{~13}, \qquad (0,-2):~~C^4_{~3i}, \qquad (1,-2):~~ C^4_{~23}.
\eeq

\section{Two $SL(3,\mathbb{R})$ examples} 
Here we will consider two examples having a  $SL(3,\mathbb{R})$ subgroup, i.e., it involves the subalgebra $\mathfrak{s}\mathfrak{k}(3,\mathbb{R})$. The examples are of dimensions 10 and 12. Due to the enumeration ambiguity, we will use {\rm hexadecimals} for the indicies; i.e., $A=10$, $B=11$, etc. 
They are of class [5,3,3,1,1] and [5,3,3,1,1,1].
\subsection{An $\mathfrak{s}\mathfrak{k}(3,\mathbb{R})\oplus\mathbb{R}^2$ example in signature $(5,5)$}
Let 
\[ \mathfrak{s}\mathfrak{k}(3,\mathbb{R})=\span\{e_1,e_3,e_5,e_6,e_7,e_8,e_9, e_A\},\qquad \mathbb{R}^2=\span\{e_2,e_4\},\] 
with structure constants: 
\beq\label{sl3RxR2}
&&C^1_{~17}=-2, \quad C^6_{~67}=2, \quad C^7_{~16}=1 \quad C^3_{~37}=C^5_{~57}=C^8_{~78}=C^A_{~7A}=-1\nonumber \\
&& C^3_{~18}
=1, \quad C^5_{~1A}=C^8_{~36}=C^A_{~56}=-1, \quad C^3_{~39}=C^8_{~89}=C^A_{~9A}=-3, \quad C^5_{~59}=3,\nonumber \\
&& C^7_{~3A}=C^9_{~3A}=\frac 12, \quad C^1_{~35}=C^6_{~8A}=1, \quad C^7_{~58}=\frac 12,\quad C^9_{~58}=-\frac 12.
\eeq
Here, the set $\{e_2,e_4,e_6,e_8,e_A\}$ spans the nilpotent Lie algebra ${\mathfrak{n}}_{3,1}\oplus \mathbb{R}^2$.
\subsection{An $\left(\mathfrak{s}\mathfrak{k}(3,\mathbb{R})\oplusrhrim\mathbb{R}^3\right)\oplus\mathbb{R}$ example in signature $(6,6)$}
Let 
\[ \mathfrak{s}\mathfrak{k}(3,\mathbb{R})=\span\{e_1,e_3,e_5,e_6,e_7,e_8,e_9, e_A\},\qquad \oplusrhrim\mathbb{R}^3=\span\{e_2,e_4, e_C\}, \qquad \oplus\mathbb{R}=\span\{e_B\}.\] 
In addition to the non-zero structure constants given in eq.(\ref{sl3RxR2}), we have the additional non-zero structure constants: 
\beq
&&C^C_{~7C}=C^C_{~9C}=C^2_{~6C}=C^4_{~AC}=C^2_{~27}=C^C_{~12}=C^C_{~34}=1, \nonumber \\
  && C^2_{~29}=C^4_{~25}=C^2_{~48}=-1, \qquad C^4_{~49}=2.
\eeq
Here,  the set $\{e_2,e_4,e_6,e_8,e_A, e_B, e_C\}$ spans the nilpotent Lie algebra $\left({\mathfrak{n}}_{3,1}\oplusrhrim\mathbb{R}^3\right)\oplus\mathbb{R}$.

\section*{Acknowledgments}
The author is grateful to J. Lauret for discussions. 
%\acknowledgments

\renewcommand{\thesection}{\Alph{section}}
\setcounter{section}{0}

\renewcommand{\theequation}{{\thesection}\arabic{equation}}

\bibliographystyle{JHEP}

%%\bibliographystyle{my_cqg}
%
%%\bibliographystyle{elsart-num_mio}
%\bibliography{bibl}

%\bibliographystyle{unsrt}

\bibliography{bibl}

\end{document}